\documentclass[12pt]{amsart}
\usepackage{color}
\usepackage{verbatim,amsmath,amssymb}

\addtolength{\oddsidemargin}{-2cm}
\addtolength{\evensidemargin}{-2cm}
\addtolength{\headheight}{.1cm}
\addtolength{\headsep}{.1cm}
\addtolength{\textheight}{-.5cm}
\addtolength{\textwidth}{4cm}
\addtolength{\footskip}{.5cm}

\setlength{\parskip}{0.2cm}

\usepackage{eucal}



\begin{document}
\newtheorem{cor}{Corollary}[section]
\newtheorem{theorem}[cor]{Theorem}
\newtheorem{prop}[cor]{Proposition}
\newtheorem{lemma}[cor]{Lemma}
\newtheorem*{lemma*}{Lemma}
\theoremstyle{definition}
\newtheorem{defi}[cor]{Definition}
\theoremstyle{remark}
\newtheorem{remark}[cor]{Remark}
\newtheorem{example}[cor]{Example}

\def\W{\mathcal{W}}
\def\A{\mathrm{A}}
\def\T{\mathrm{T}}
\def\Sym{\mathrm{Sym}}
\def\Id{\mathrm{Id}}

\def\f{\varphi}
\def\e{\varepsilon}
\def\d{\mathrm{d}}
\def\s{\sigma}
\def\de{\delta}
\def\O{\Omega}
\def\o{\omega}
\def\R{\mathrm{R}}
\def\a{\alpha}
\def\b{\beta}
\def\g{\gamma}
\def\l{\lambda}
\def\la{\langle}
\def\ra{\rangle}
\def\res{|}
\def\SO{{\rm SO}}
\def\U{{\rm U}}
\def\Hol{{\rm Hol}}
\def\Ric{{\rm Ric}}
\def\End{{\rm End}}
\def\tr{{\rm tr}}
\def\id{{\rm Id}}
\def\S{{\rm S}}
\def\vol{{\rm vol}}
\newcommand{\be}{\begin{equation}}
\newcommand{\ee}{\end{equation}}
\def\beq{\begin{eqnarray*}}
\def\eeq{\end{eqnarray*}}
\def\p{\psi}
\def\.{{\cdot}}
\def\n{\nabla}
\def\ci{C^\infty}
\def\RR{\mathcal{R}}
\def\B{\mathrm{Ric}_0}
\def\nt{\tilde\nabla}
\def\ck{\mathcal{CK}}
\def\RM{\mathbb{R}}
\def\C{\mathbb{C}}

\title{Conformal Killing 2-forms on 4-dimensional manifolds}
\author{Adri\'an Andrada}
\address{Adri\'an Andrada, FAMAF-CIEM, Universidad Nacional de C\'ordoba, Ciudad Universitaria, 5000 C\'ordoba, Argentina}
\email{andrada@famaf.unc.edu.ar}
\author{Mar\'ia Laura Barberis}
\address{Mar\'ia Laura Barberis, FAMAF-CIEM, Universidad Nacional de C\'ordoba, Ciudad Universitaria, 5000 C\'ordoba, Argentina}
\email{barberis@famaf.unc.edu.ar}
\author{Andrei Moroianu}\thanks{The first and second authors  were partially supported by
CONICET, ANPCyT and SECyT-UNC (Argentina), and the third author was partially supported by the ANR-10-BLAN 0105 grant of the
Agence Nationale de la Recherche (France).}
\address{Andrei Moroianu \\ Laboratoire de Math\'ematiques de Versailles, UVSQ, CNRS, Universit\'e Paris-Saclay, 78035 Versailles, France }
\email{andrei.moroianu@math.cnrs.fr}
\date{\today}
\begin{abstract}
We study 4-dimensional simply connected Lie groups $G$ with left-invariant Riemannian metric $g$ admitting non-trivial conformal Killing 2-forms. We show that either the real line defined by such a form is invariant under the group action, or the metric is half conformally flat. In the first case, the problem reduces to the study of invariant conformally K\"ahler structures, whereas in the second case, the Lie algebra of $G$ belongs (up to homothety) to a finite list of families of metric Lie algebras.
\end{abstract}

\keywords{Conformal Killing forms, invariant conformally K\"ahler structures, half conformally flat metrics.} 
\maketitle

\section{Introduction}

{\em Conformal Killing forms} (sometimes called twistor forms) on Riemannian manifolds generalize to higher degrees the notion of conformal vector fields. They can be characterized by the fact that their covariant derivative with respect to the Levi-Civita connection is completely determined by  their exterior derivative and co-differential (cf. Definition \ref{dcf} below). For basic facts on conformal Killing forms, see \cite{uwe} and references therein. 

A conformal Killing form whose co-differential vanishes is called {\em Killing form}. The metric structure of 4-dimensional Riemannian manifolds with Killing 2-forms was recently described in \cite{gm}, while the case of left invariant Killing 2-forms on Lie groups with left invariant metrics was studied in \cite{BDS}.  The $3$-dimensional Lie 
groups with left invariant metrics carrying left invariant conformal Killing  $2$-forms were classified in \cite{ABD}.  

In this paper we will be mainly concerned with the problem of conformal Killing 2-forms on 4-dimensional Riemannian manifolds $(M,g)$. After obtaining some general results in Section \ref{3}, we consider the particular case where $M=G$ is a simply connected Lie group and $g$ is a left invariant Riemannian metric on $G$ induced by a scalar product $\la.,.\ra$ on the Lie algebra $\mathfrak{g}$ of $G$. 
Our main result is Theorem \ref{main}, where we show that if a simply connected $4$-dimensional Lie group with left invariant metric carries a non-trivial conformal Killing 2-form $\o$, then it either has a left invariant conformally K\"ahler structure, or the metric Lie algebra $(\mathfrak{g},\la.,.\ra)$ belongs to an explicit list of half-conformally flat metric Lie algebras.
Note that we do not assume any invariance property for $\o$. As a matter of fact, every {\em left invariant} conformal Killing 2-form on a 4-dimensional Lie group is automatically parallel (cf. Lemma \ref{const}).

The proof of Theorem \ref{main} goes roughly as follows. One can first assume that the conformal Killing form $\o$ is self-dual. If the line generated by $\o$ is $G$-invariant, then $\o$ divided by its norm defines a left invariant conformally K\"ahler structure. If $\mathbb{R}\o$ is not $G$-invariant, then the space of self-dual conformal Killing forms is at least 2-dimensional, and in this case the manifold is half-conformally flat. We then use the classification of these manifolds by S. Maier \cite{maier} and V. De Smedt, S. Salamon \cite{salamon} and show that, conversely, every half-conformally flat simply connected $4$-dimensional Lie group carries non-parallel conformal Killing forms. 

\section{Preliminaries}
\subsection{Four-dimensional Euclidean geometry}

Let $(E,\la .,.\ra)$ be an oriented 4-dimen\-sional Euclidean vector space. We identify $E$ and $E^*$ by means of the Euclidean metric. Similarly, $\Lambda^2E$ will be identified with the space of skew-symmetric endomorphisms:
$$X\wedge Y=(Z\mapsto \la X,Z\ra Y-\la Y,Z\ra X),\qquad\forall X,Y,Z\in E.$$ 
We endow the exterior algebra of $E$ with the unique scalar product equal to $\la ., . \ra$ on $E$ with respect to which the interior and exterior product with vectors are adjoint endomorphisms:
$$\la X\lrcorner \alpha,\beta\ra=\la\alpha,X\wedge\beta\ra,\qquad\forall \alpha,\beta\in\Lambda^* E,\ \forall X\in E.$$

The volume form of $E$ will be denoted by $\vol\in\Lambda^4E$. 
The Hodge duality is the automorphism $*$ of $\Lambda\* E$ defined by 
$$\a\wedge*\b=\la\a,\b\ra\, \vol ,\qquad\forall \alpha,\beta\in\Lambda^* E.$$
It satisfies $*^2=(-1)^k$ on $\Lambda^k E$ and its eigenspaces for the eigenvalues $\pm1$ are denoted by $\Lambda^2_\pm E$.
It is easy to check that as endomorphisms, every element of $\Lambda^2_+E$ commutes with every element of $\Lambda^2_-E$, and that 
\be\label{com}[\Lambda^2_+ E, \Lambda^2_+ E]\subset \Lambda^2_+ E,\qquad [\Lambda^2_- E, \Lambda^2_- E]\subset \Lambda^2_- E.\ee

For $X\in E$ the following formulas hold:
\be\label{dual} X\lrcorner*\a=(-1)^k*(X\wedge \a),\qquad X\wedge*\a=(-1)^{k-1}*(X\lrcorner \a),\qquad\forall \a\in\Lambda^kE.
\ee
If $e_i$ denotes an orthonormal basis of $E$ then 
\be\label{sum}e_i\wedge(e_i\lrcorner\a)=k\a,\qquad e_i\lrcorner(e_i\wedge \a)=(4-k)\a,\qquad\forall \a\in\Lambda^kE
\ee
(here and in the sequel we use Einstein's summation convention over repeating subscripts).

For every $\a\in\Lambda^2E$ and $X,Y\in E$ we have the following commutator relation
\be\label{aco}[\a,X\wedge Y]=\a(X)\wedge Y+X\wedge\a(Y).\ee
From \eqref{com} we thus get:
\be\label{pm}\a(X)\wedge Y+X\wedge\a(Y)\in\Lambda^2_\pm E,\qquad\forall \a\in\Lambda^2_\pm E.\ee

\subsection{The curvature tensor of 4-dimensional Riemannian manifolds}

Let $(M,g)$ be an oriented, connected  4-dimensional Riemannian manifold, with Levi-Civita covariant derivative $\n$ and curvature tensor
$$\R_{X,Y}Z=[\n_X,\n_Y]Z-\n_{[X,Y]}Z,\qquad\forall X,Y,Z\in \ci(\T M).$$
We will sometimes see the curvature tensor as a fully covariant tensor by the formula 
$$\R(X,Y,Z,V):=g(\R_{X,Y}Z,V).$$ 
As such, the curvature tensor is symmetric by pairs and satisfies the Bianchi identities.
The Ricci tensor is defined by 
$$\Ric(X,Y):=\tr\left\{V\mapsto \R_{V,X}Y\right\}=g(\R_{e_i,X}Y,e_i)$$
with respect to some  local orthonormal basis $e_i$ of $\T M$.
We also introduce the scalar curvature $\S:=\tr_g\Ric=\Ric(e_i,e_i)$, the trace-free Ricci tensor $\B:=\Ric-\frac14 \S$ and the (symmetric) curvature operator 
$\RR:\Lambda^2 M\to \Lambda^2 M$ defined by 
$$g(\RR(X\wedge Y),Z\wedge V):=g(\R_{Y,X}Z,V).$$ 
Every symmetric tensor $A\in\Sym^2(M)$ defines a symmetric endomorphism $\tilde A$ of $\Lambda^2 M$ by 
$$\tilde A(X\wedge Y):= A(X)\wedge Y+X\wedge A(Y)=A\circ (X\wedge Y)+(X\wedge Y)\circ A.$$
If $A$ is trace-free then $\tilde A$ maps $\Lambda^2_\pm M$ to $\Lambda^2_\mp M$. The curvature operator decomposes as follows under the natural $\SO(4)$ action:
\be\label{r1}\RR=\frac \S{12}\Id+\frac12\widetilde \B+\W^++\W^-,\ee
where $\W^\pm:\Lambda^2_\pm M\to \Lambda^2_\pm M$ are the self-dual and anti-self-dual Weyl tensors. Summarizing, we have the following matrix decomposition of $\RR$:
\be\label{r}\RR=\begin{pmatrix} \frac \S{12}\Id+\W^+& \frac12 \widetilde \B\\ & \\ \frac12 \widetilde \B& \frac \S{12}\Id+\W^-\end{pmatrix}.\ee

Let now $\theta$ be a vector field on $M$ and denote by $Q:=(\n\theta)^s$ the symmetric part of the endomorphism $\n\theta$. For later use, we now derive a formula relating the covariant derivatives of $\d\theta$ and $(\n\theta)^s$.

\begin{lemma} \label{l} The following formula holds:
$$\nabla_X\d\theta=2(\d^\n Q)(X)+2\R_{X,\theta},$$
where $(\d^\n Q)(X):=e_i\wedge(\n_{e_i}Q)(X)$.
\end{lemma}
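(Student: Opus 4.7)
The plan is to evaluate the 2-form $\n_X \d\theta$ on an arbitrary pair of vectors $(Y,Z)$, commute covariant derivatives via Ricci's identity, and then match the outcome with the stated right-hand side. The starting point is the algebraic identity $\n\theta = Q + \tfrac12 \d\theta$, where $\d\theta$ is viewed as a skew endomorphism of $\T M$ under the identifications of Section~2; this follows from $\d\theta(X,Y) = \la \n_X\theta, Y \ra - \la \n_Y\theta, X \ra$ together with the definition $Q = (\n\theta)^s$.

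Unwinding the definition of the covariant derivative of a 2-form yields $(\n_X \d\theta)(Y,Z) = \la \n^2_{X,Y}\theta, Z \ra - \la \n^2_{X,Z}\theta, Y \ra$. Applying the Ricci identity $\n^2_{X,W}\theta = \n^2_{W,X}\theta + \R_{X,W}\theta$ produces a \emph{swapped} piece plus a curvature piece. For the swapped piece, the decomposition $\n^2_{W,X}\theta = (\n_W Q)(X) + \tfrac12 (\n_W \d\theta)(X)$ splits it further into a $Q$-difference $(\n_Y Q)(X,Z) - (\n_Z Q)(X,Y)$, which is exactly $(\delta^\n Q)(X)(Y,Z)$ by the definition of $\delta^\n Q$, and a $\d\theta$-difference $\tfrac12[(\n_Y \d\theta)(X,Z) - (\n_Z \d\theta)(X,Y)]$. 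The closed-form identity $\d\d\theta = 0$, which for a 2-form reads $\sum_{\mathrm{cyc}}(\n_X \d\theta)(Y,Z) = 0$, rewrites this $\d\theta$-difference precisely as $\tfrac12(\n_X \d\theta)(Y,Z)$; transferring it to the left doubles the coefficient of $\n_X \d\theta$.

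It remains to identify the curvature bracket $\la \R_{X,Y}\theta, Z \ra - \la \R_{X,Z}\theta, Y \ra$ with $\R_{X,\theta}(Y,Z) = \R(X,\theta,Y,Z)$. I expect this algebraic step to be the main bookkeeping obstacle: expanding each summand via the first Bianchi identity and using the antisymmetry of $\R$ in its last two slots collects a $2\R(X,\theta,Y,Z)$ contribution, while pair-symmetry shows the remaining terms amount to the negative of the original bracket, yielding a linear equation whose solution is the desired identification. Combining the three steps gives $\tfrac12(\n_X \d\theta)(Y,Z) = (\delta^\n Q)(X)(Y,Z) + \R_{X,\theta}(Y,Z)$, which is the claim after multiplication by $2$.
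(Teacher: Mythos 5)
Your proof is correct and follows essentially the same route as the paper's: both start from $\n\theta=Q+\tfrac12\d\theta$, commute second covariant derivatives to produce curvature, use the first Bianchi identity together with pair symmetry to identify the curvature contribution with $\R_{X,\theta}$, and invoke $\d\d\theta=0$ to absorb the remaining $\d\theta$-terms. The only difference is presentational: you evaluate the $2$-form $\n_X\d\theta$ on an arbitrary pair $(Y,Z)$, whereas the paper wedges the skew-symmetrized identity with $e_i$ and sums over an orthonormal frame.
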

\begin{proof} Taking a further covariant derivative in the equation $\n_X\theta=Q(X)+\tfrac12\d\theta(X)$ and skew-symmetrizing yields
$$\R_{X,Y}\theta=(\n_XQ)(Y)-(\n_YQ)(X)+\tfrac12((\n_X\d\theta)(Y)-(\n_Y\d\theta)(X)).$$
We take the exterior product with $Y$ in this relation and sum over a local orthonormal basis $Y=e_i$ to obtain:
\be\label{rx}e_i\wedge\R_{X,e_i}\theta=e_i\wedge(\n_XQ)(e_i)-(\d^\n Q)(X)+\tfrac12e_i\wedge(\n_X\d\theta)(e_i)-\tfrac12e_i\wedge(\n_{e_i}\d\theta)(X).\ee
We now investigate each term in this relation. From the first Bianchi identity we get 
$$\R(X,\theta,e_i,e_j)=\R(X,e_i,\theta,e_j)-\R(X,e_j,\theta,e_i),$$
whence by skew-symmetry and \eqref{sum}:
\beq e_i\wedge\R_{X,e_i}\theta&=&e_i\wedge e_j \R(X,e_i,\theta,e_j)=\tfrac 12e_i\wedge e_j (\R(X,e_i,\theta,e_j)-\R(X,e_j,\theta,e_i))\\
&=&\tfrac 12e_i\wedge e_j\R(X,\theta,e_i,e_j)=\tfrac 12e_i\wedge e_j\R_{X,\theta}e_i=\R_{X,\theta}.\eeq
The first term in the right hand side of \eqref{rx} vanishes since $\n_XQ$ is symmetric. By \eqref{sum} again, the third term is equal to $\n_X\d\theta$, and the fourth term equals to
$$-\tfrac12e_i\wedge(\n_{e_i}\d\theta)(X)=\tfrac12X\lrcorner (e_i\wedge(\n_{e_i}\d\theta))-\tfrac12\n_X\d\theta=-\tfrac12\n_X\d\theta.$$
Substituting these results in \eqref{rx} yields 
$$\R_{X,\theta}=-(\d^\n Q)(X)+\tfrac12\n_X\d\theta,$$
which is equivalent to the desired formula.
\end{proof}

\section{Conformal Killing forms in dimension 4}\label{3}

\begin{defi}\label{dcf}[cf. \cite{uwe}] A conformal Killing $p$-form on a $n$-dimensional Riemannian manifold $(M,g)$ is a $p$-form $\o\in \Omega^p(M)$ satisfying the equation 
\be\label{ckf} \n_X\o=X\wedge\a+ X\lrcorner \b,\qquad\forall X\in\T M,
\ee
for some forms $\a\in\Omega^{p-1}M$ and $\b\in\Omega^{p+1}M$.
\end{defi}
By taking the exterior or interior product with $X$ and summing over an orthonormal basis, we immediately get $\a=-\frac1{n+1-p}\delta\o$ and $\b=\frac1{p+1}\d \o$.

The following results are folklore, see e.g. \cite{uwe} and references therein. 

\begin{lemma}\label{conf-inv} If $\o$ is a conformal Killing $p$-form with respect to a metric $g$, then $\tilde\o:=f^{p+1}\o$ is a conformal Killing $p$-form with respect to the conformal metric $\tilde g:=f^2 g$. 
\end{lemma}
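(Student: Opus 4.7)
My plan is to check the conformal Killing condition for $\tilde\omega$ by directly computing $\tilde\nabla_X\tilde\omega$, using the standard formula relating the Levi-Civita connections $\nabla$ and $\tilde\nabla$ of two conformally related metrics. Setting $\varphi:=\log f$, $\zeta:=d\varphi$ and $Z:=\nabla\varphi$, the change of Christoffel symbols $\tilde\Gamma^k_{ij}-\Gamma^k_{ij}=\delta^k_i\zeta_j+\delta^k_j\zeta_i-g_{ij}Z^k$ gives, after a routine slot-by-slot computation on a $p$-form, the identity
$$
\tilde\nabla_X\omega-\nabla_X\omega = -p\,\zeta(X)\,\omega - \zeta\wedge(X\lrcorner\omega) + X^\flat\wedge(Z\lrcorner\omega),
$$
where the two wedge summands arise by recognising that $\sum_i\zeta(Y_i)\omega(\ldots,X,\ldots)$ and $\sum_i g(X,Y_i)\omega(\ldots,Z,\ldots)$ are precisely the components of $\zeta\wedge(X\lrcorner\omega)$ and $X^\flat\wedge(Z\lrcorner\omega)$ respectively.

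Next I would apply the Leibniz rule to $\tilde\omega=f^{p+1}\omega$, which yields
$$
\tilde\nabla_X\tilde\omega = (p+1)f^{p+1}\zeta(X)\,\omega + f^{p+1}\tilde\nabla_X\omega.
$$
Substituting the previous displayed identity, the two terms proportional to $\zeta(X)\,\omega$ combine (this is where the exponent $p+1$ is used essentially) to give
$$
\tilde\nabla_X\tilde\omega = f^{p+1}\bigl[\nabla_X\omega + \zeta(X)\,\omega - \zeta\wedge(X\lrcorner\omega) + X^\flat\wedge(Z\lrcorner\omega)\bigr].
$$
The key algebraic observation is the Leibniz-type identity $X\lrcorner(\zeta\wedge\omega)=\zeta(X)\,\omega - \zeta\wedge(X\lrcorner\omega)$, which collapses the middle two summands into $X\lrcorner(\zeta\wedge\omega)$.

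Finally, plugging in the conformal Killing equation $\nabla_X\omega=X^\flat\wedge\alpha+X\lrcorner\beta$ for $\omega$ with respect to $g$, the computation reads
$$
\tilde\nabla_X\tilde\omega = f^{p+1}\Bigl[X^\flat\wedge\bigl(\alpha+Z\lrcorner\omega\bigr) + X\lrcorner\bigl(\beta+\zeta\wedge\omega\bigr)\Bigr].
$$
Since the musical isomorphism associated to $\tilde g=f^2g$ satisfies $X^{\tilde\flat}=f^2 X^\flat$, while the interior product is metric-independent, this has the required form \eqref{ckf} for the metric $\tilde g$ with
$$
\tilde\alpha = f^{p-1}(\alpha+Z\lrcorner\omega),\qquad \tilde\beta = f^{p+1}(\beta+\zeta\wedge\omega),
$$
completing the verification. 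The only mildly delicate point is bookkeeping the conformal behaviour of $\flat$ versus $\lrcorner$; everything else reduces to the standard $p$-form connection formula above and the interior-product Leibniz rule.
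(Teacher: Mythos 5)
Your computation is correct and complete. The paper gives no proof of this lemma (it is quoted as folklore from \cite{uwe}), so there is no in-text argument to compare against; your direct verification via the conformal change of the Levi-Civita connection is the standard one, and you correctly handle the two delicate points: the cancellation of the $-p\,\zeta(X)\,\omega$ term against the Leibniz derivative of $f^{p+1}$ (which is exactly what forces the exponent $p+1$), and the fact that $X\wedge\alpha$ in \eqref{ckf} means $X^\flat\wedge\alpha$ and hence picks up a factor $f^{2}$ under the conformal change, while $X\lrcorner\beta$ is metric-independent.
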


\begin{theorem}\label{parallel}
There exists a connection $\nabla^K$ on $\Lambda^pM\oplus\Lambda^{p-1}M\oplus\Lambda^{p+1}M\oplus\Lambda^pM$ (called the Killing connection), such that for every conformal Killing $p$-form $\o$, the tuple $(\o,\delta\o, \d\o, \Delta\o)$ is $\nabla^K$-parallel. Conversely, the first component of a $\nabla^K$-parallel section is a conformal Killing form. Moreover, the curvature of the Killing connection vanishes if the Riemannian curvature of $(M,g)$ vanishes. 
\end{theorem}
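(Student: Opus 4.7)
My plan is to carry out the standard prolongation of the overdetermined system \eqref{ckf}. Taking a second covariant derivative of the defining equation and skew-symmetrizing in the two vector arguments, the Ricci identity $[\n_X,\n_Y]\o = \R_{X,Y}\cdot\o$ (with $\R_{X,Y}$ acting as a derivation on forms) produces
$$X\wedge(\n_Y\a)-Y\wedge(\n_X\a)+X\lrcorner(\n_Y\b)-Y\lrcorner(\n_X\b)=\R_{X,Y}\cdot\o,$$
where $\a=-\tfrac1{n+1-p}\delta\o$ and $\b=\tfrac1{p+1}\d\o$. Setting $Y=e_i$ and contracting by $e_i\lrcorner\,\cdot$ or $e_i\wedge\,\cdot$, using \eqref{sum} and the first Bianchi identity (along the lines of the proof of Lemma \ref{l}), one isolates independent expressions
$$\n_X(\delta\o)=F_1(X;\o,\delta\o,\d\o,\Delta\o),\qquad \n_X(\d\o)=F_2(X;\o,\delta\o,\d\o,\Delta\o),$$
whose coefficients are linear in $X$, in the metric, and in the Riemann curvature acting on $\o$. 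A further covariant derivative together with $\Delta=\d\delta+\delta\d$ yields analogously an expression for $\n_X(\Delta\o)$ in the same four components, now with coefficients involving also $\n\R$.

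Packaging these four first-order identities as $\n_X\Phi=\mathcal{A}(X)\Phi$, where $\Phi:=(\o,\delta\o,\d\o,\Delta\o)$ is viewed as a section of $E:=\Lambda^pM\oplus\Lambda^{p-1}M\oplus\Lambda^{p+1}M\oplus\Lambda^pM$ and $\mathcal{A}(X)\in\End(E)$ is read off from the formulas, I would define the Killing connection by
$$\n^K_X:=\n_X-\mathcal{A}(X).$$
The first assertion then holds by construction. For the converse, the first row of $\n^K\Phi=0$ reads
$$\n_X\phi_0=-\tfrac1{n+1-p}X\wedge\phi_1+\tfrac1{p+1}X\lrcorner\phi_2,$$
and taking $e_i\lrcorner\,\cdot$ and $e_i\wedge\,\cdot$ traces as in the remark following Definition \ref{dcf} forces $\phi_1=\delta\phi_0$ and $\phi_2=\d\phi_0$, so that $\phi_0$ is a conformal Killing form.

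For the flatness statement I would use a dimension count rather than a direct curvature computation. The rank of $E$ equals
$$2\binom{n}{p}+\binom{n}{p-1}+\binom{n}{p+1}=\binom{n+2}{p+1}$$
by Pascal's rule, and the latter is known to equal the dimension of the space of conformal Killing $p$-forms on a flat simply connected $n$-manifold (see \cite{uwe}). By the first part of the theorem, each such form lifts injectively to a $\n^K$-parallel section of $E$. Hence if $\R\equiv 0$, locally there exist $\mathrm{rk}(E)$ linearly independent $\n^K$-parallel sections, which forces the curvature of $\n^K$ to vanish. The main obstacle in this programme is the initial step: the contractions of the twice-differentiated conformal Killing equation mix $\Delta\o$ with curvature terms acting on $\o$, and careful Weitzenböck-type bookkeeping is required to extract the two formulas for $\n(\delta\o)$ and $\n(\d\o)$ in clean, non-circular form.
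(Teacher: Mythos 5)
The paper does not actually prove Theorem \ref{parallel}; it records it as folklore and points to \cite{uwe}, where precisely the prolongation you describe is carried out. Your outline is therefore the intended proof, and its architecture is sound: the converse direction is complete as stated (the first row of $\nabla^K\Phi=0$ is literally equation \eqref{ckf} with $\a=-\tfrac1{n+1-p}\phi_1$ and $\b=\tfrac1{p+1}\phi_2$, so $\phi_0$ is conformal Killing by Definition \ref{dcf} even before you identify $\phi_1,\phi_2$), and the flatness-by-dimension-count is legitimate: $r:=\binom{n+2}{p+1}$ linearly independent parallel sections of a rank-$r$ bundle over a connected base are pointwise linearly independent (a parallel section vanishing at one point vanishes identically), hence furnish a parallel frame, hence $\R^{\nabla^K}=0$.

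Two caveats. First, the substantive content of the theorem is exactly the step you defer. One must show that the system \emph{closes} on the four components $(\o,\delta\o,\d\o,\Delta\o)$: differentiating \eqref{ckf} and contracting produces, a priori, the quantities $\d\delta\o$ and $\delta\d\o$ \emph{separately}, not merely their sum $\Delta\o$, and with five independent components the rank count $\binom{n+2}{p+1}$ would fail. What rescues the construction is an integrability condition obtained by contracting the curvature identity twice, a Weitzenb\"ock-type relation of the shape $\tfrac1{p+1}\delta\d\o+\tfrac1{n-p+1}\d\delta\o=q(\R)\o$, which eliminates one of the two in favour of $\Delta\o$ plus zeroth-order curvature terms; until this identity is derived, ``the system closes'' is an assertion rather than a proof. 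Second, in the flatness argument the lower bound $\dim\mathcal{CK}_p(\RM^n)\geq\binom{n+2}{p+1}$ must come from an \emph{explicit construction} of the polynomial solutions on flat space (constant forms, $x\lrcorner\b$, $x\wedge\g$, and the quadratic family), and not from Corollary \ref{max} of the present paper, whose equality case is itself deduced from the flatness of $\nabla^K$ --- that would be circular. With these two points supplied, your argument coincides with the standard proof.
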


\begin{cor}\label{vanish}
A conformal Killing form which vanishes on some non-empty open set, vanishes identically.
\end{cor}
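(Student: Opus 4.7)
The strategy is to invoke Theorem \ref{parallel}, which promotes any conformal Killing $p$-form $\o$ together with its derivatives into a $\n^K$-parallel section of a vector bundle, and then to apply the familiar fact that a parallel section over a connected manifold is determined by its value at a single point.

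First I would pick a point $x_0$ in the non-empty open set $U\subset M$ on which $\o$ vanishes. Since $\delta$, $\d$, and $\Delta$ are local differential operators, the identical vanishing $\o\res_U\equiv 0$ forces $\delta\o$, $\d\o$, and $\Delta\o$ to vanish throughout $U$ as well. In particular, the quadruple $(\o,\delta\o,\d\o,\Delta\o)$ is zero at $x_0$.

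Next, Theorem \ref{parallel} asserts that this quadruple is $\n^K$-parallel. The uniqueness of parallel transport along piecewise-smooth paths then implies that a parallel section of a vector bundle which vanishes at one point of a connected manifold vanishes identically. Since $(M,g)$ is assumed connected (cf.\ the standing conventions in Section 2.2), it follows that $(\o,\delta\o,\d\o,\Delta\o)\equiv 0$ on $M$, and reading off the first component yields $\o\equiv 0$.

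There is no substantial obstacle here: all the heavy lifting has been done in Theorem \ref{parallel}, and what remains is a clean unique-continuation argument. The only point to watch is the elementary but essential observation that $\o\res_U\equiv 0$ propagates to the vanishing of \emph{all four} components of the parallel tuple on $U$; only then can Theorem \ref{parallel} take over and close the argument.
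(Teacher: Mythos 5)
Your argument is correct and is exactly the standard deduction the paper intends (the corollary is stated without proof as an immediate consequence of Theorem \ref{parallel}): since $\delta$, $\d$ and $\Delta$ are local operators, the parallel tuple vanishes at a point of the open set, hence everywhere on the connected manifold. Nothing further is needed.
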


\begin{cor}\label{max} The space $\mathcal{CK}_p$ of conformal Killing $p$-forms is a finite dimensional vector space of dimension $\leq \binom{n+2}{p+1}$. Moreover, the equality holds if $M$ is simply connected and $g$ is conformally flat.
\end{cor}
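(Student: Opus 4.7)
The plan is to invoke Theorem \ref{parallel}: the linear assignment $\Phi\colon\omega\longmapsto(\omega,\delta\omega,d\omega,\Delta\omega)$ embeds $\mathcal{CK}_p$ into the space of $\nabla^K$-parallel sections of the bundle
\[ E:=\Lambda^p M\oplus\Lambda^{p-1}M\oplus\Lambda^{p+1}M\oplus\Lambda^p M, \]
injectivity being immediate from reading off the first component. Since parallel sections of any connection on a connected manifold are determined by their value at one point, the dimension of the space of parallel sections is bounded by the rank of $E$. Two applications of Pascal's identity give
\[ \mathrm{rk}(E)=2\binom{n}{p}+\binom{n}{p-1}+\binom{n}{p+1}=\binom{n+1}{p}+\binom{n+1}{p+1}=\binom{n+2}{p+1}, \]
which proves the upper bound.

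For the equality statement, the strategy is first to settle the flat case and then to reduce the conformally flat case to it. If $g$ is flat and $M$ is simply connected, the last clause of Theorem \ref{parallel} ensures that $\nabla^K$ is a flat connection, so its space of parallel sections has the maximal dimension $\binom{n+2}{p+1}$. By the converse part of the same theorem, the first component of any parallel section is a conformal Killing form; moreover, the Killing connection is constructed so that on a parallel section the remaining three components are recovered from the first by applying $\delta$, $d$ and $\Delta$. Hence $\Phi$ is in fact a bijection onto parallel sections, so $\dim\mathcal{CK}_p=\binom{n+2}{p+1}$ in this case.

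To pass to the general simply connected, conformally flat case I would invoke Lemma \ref{conf-inv}: under a conformal change $\tilde g=f^2 g$, the map $\omega\mapsto f^{p+1}\omega$ is a linear bijection between the corresponding CKF spaces. Each point of $M$ has a simply connected neighborhood on which $g$ is conformal to a flat metric, so on such a neighborhood the space of locally defined CKFs has dimension $\binom{n+2}{p+1}$ by the flat case. Theorem \ref{parallel} further realizes CKFs as first components of parallel sections of a vector bundle with connection, so the sheaf of CKFs is locally constant, and Corollary \ref{vanish} guarantees unique extension along overlaps. Simple-connectedness of $M$ then eliminates the monodromy and yields $\binom{n+2}{p+1}$ linearly independent global CKFs.

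The main technical obstacle is the assertion used in the flat step that a $\nabla^K$-parallel section is completely determined by its first component, so that $\Phi$ is a two-sided inverse of the first-component projection. This should follow directly from the construction of $\nabla^K$ itself: its parallel-transport equation on the top factor reproduces the conformal Killing equation $\nabla_X\omega=X\wedge\alpha+X\lrcorner\beta$ with $\alpha$ and $\beta$ proportional to the second and third components, which forces these components to equal $-\tfrac{1}{n+1-p}\delta\omega$ and $\tfrac{1}{p+1}d\omega$ respectively; an analogous consistency fixes the fourth component as $\Delta\omega$.
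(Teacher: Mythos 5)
Your proof is correct, and the upper bound is obtained exactly as in the paper: the map $\o\mapsto(\o,\delta\o,\d\o,\Delta\o)_x$ injects $\mathcal{CK}_p$ into a fibre of the Killing bundle, whose rank is $\binom{n+2}{p+1}$ by the Pascal identities you wrote out. For the equality statement, however, you take a genuinely different route. The paper invokes Kuiper's theorem to produce a \emph{globally} flat metric $\tilde g$ in the conformal class of the simply connected conformally flat $(M,g)$, applies the flat case to $(M,\tilde g)$, and transfers back via Lemma \ref{conf-inv}. You instead work purely locally: on small simply connected neighbourhoods the metric is conformal to a flat one, so the local solution space already has the maximal dimension $\binom{n+2}{p+1}$; since these local solutions embed into the parallel sections of $\nabla^K$ for the \emph{original} metric $g$, this forces $\nabla^K$ itself to be flat (a connection admitting a full local basis of parallel sections has vanishing curvature), and then simple connectedness of $M$ kills the monodromy of the resulting local system and produces $\binom{n+2}{p+1}$ global solutions. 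You should state that intermediate deduction (flatness of $\nabla^K$ for $g$ itself) explicitly, since it is what makes ``the sheaf of CKFs is locally constant'' precise; Corollary \ref{vanish} by itself only gives uniqueness of continuation, not existence. What your version buys is independence from Kuiper's global statement: it uses only the local definition of conformal flatness, so it covers, for instance, the round sphere, whose conformal class contains no globally flat metric even though the conclusion of the corollary holds there; the paper's version is shorter but leans on that global input. Finally, you correctly isolate the one technical point both arguments need, namely that a $\nabla^K$-parallel section is determined by its first component (so that the count of parallel sections really is a count of conformal Killing forms), and your justification from the construction of the Killing connection is the right one.
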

\begin{proof}
The inequality follows directly from Theorem \ref{parallel} and the fact that for every $x\in M$ the map $\mathcal{CK}_p\to (\Lambda^pM\oplus\Lambda^{p-1}M\oplus\Lambda^{p+1}M\oplus\Lambda^pM)_x$ given by $\o\mapsto (\o,\delta\o, \d\o, \Delta\o)_x$ is injective. 

If $M$ is simply connected and $g$ is conformally flat, a classical result of Kuiper \cite{kuiper} shows that the conformal class $[g]$ contains a flat metric $\tilde g$. By Theorem \ref{parallel} the Killing connection of $(M,\tilde g)$ is flat, so $\mathcal{CK}_p(M,\tilde g)$ has dimension $\binom{n+2}{p+1}$. The last statement thus follows from Lemma \ref{conf-inv}.
\end{proof}

Assume now that $n=4$ and $p=2$. 

\begin{lemma}\label{asd}
If $\o$ is a conformal Killing $2$-form on $M$, its self-dual and anti-self-dual parts $\o_\pm$ are also conformal Killing 2-forms. 
\end{lemma}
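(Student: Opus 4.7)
The plan is to exploit the linearity of the conformal Killing equation together with the parallelism of the Hodge star operator. Since $\o_\pm=\tfrac12(\o\pm*\o)$, it suffices to show that $*\o$ is itself a conformal Killing 2-form, after which $\o_\pm$ inherit the property as linear combinations of conformal Killing forms.

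To see this, I would start from the defining relation $\n_X\o=X\wedge\a+X\lrcorner\b$, where $\a\in\Omega^1 M$ and $\b\in\Omega^3 M$ are the forms associated to $\o$ by Definition \ref{dcf}. Since the Hodge star commutes with the Levi-Civita connection, $\n_X(*\o)=*(\n_X\o)=*(X\wedge\a)+*(X\lrcorner\b)$. The identities \eqref{dual}, applied with $k=1$ on the wedge term and with $k=3$ on the interior-product term, rewrite these as interior and exterior products of $X$ with $*\a\in\Omega^3 M$ and $*\b\in\Omega^1 M$, putting $\n_X(*\o)$ in the shape prescribed by \eqref{ckf}. The corresponding new $1$-form and $3$-form for $*\o$ turn out to be $*\b$ and $-*\a$ (up to signs that one must track carefully), and substituting into $\o_\pm=\tfrac12(\o\pm*\o)$ yields explicit associated forms $\tfrac12(\a\pm*\b)$ and $\tfrac12(\b\mp*\a)$ for $\o_\pm$.

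The whole argument is a short algebraic verification, so there is no substantive obstacle; the only point requiring any care is tracking the $(-1)^k$ and $(-1)^{k-1}$ signs in \eqref{dual} with the correct degrees ($k=1$ for the wedge term, $k=3$ for the interior-product term). No use of the Ricci or Weyl tensors is needed at this stage, since the claim is purely about the symbol of the conformal Killing equation and the fact that the decomposition $\Lambda^2 M=\Lambda^2_+M\oplus\Lambda^2_-M$ is $\n$-parallel.
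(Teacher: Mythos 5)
Your proposal is correct and is essentially the paper's own argument: apply the parallel Hodge star to the conformal Killing equation, use the identities \eqref{dual} to recognize $\n_X(*\o)=-X\lrcorner(*\a)+X\wedge(*\b)$ as again being of conformal Killing type, and conclude by linearity that $\o_\pm=\tfrac12(\o\pm*\o)$ satisfy \eqref{ckf} with associated forms $\tfrac12(\a\pm*\b)$ and $\tfrac12(\b\mp*\a)$, exactly as in the paper.
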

\begin{proof}
Taking the Hodge dual in \eqref{ckf} and using \eqref{dual} yields 
\be\label{ckf1} \n_X(*\o)=-X\lrcorner(*\a)+X\wedge *\b,\qquad\forall X\in\T M,
\ee
so 
$$\n_X(\o_\pm)=\tfrac12X\wedge(\a\pm*\b)+ \tfrac12X\lrcorner (\b\mp*\a),\qquad\forall X\in\T M.$$

\end{proof}

Let $\o$ be a non-trivial conformal Killing $2$-form on $M$. 
After changing the orientation if necessary, we can thus assume that $\o\in\O^2_+(M)$ is self-dual and non-trivial. The conformal Killing equation reduces to 
\be\label{ck}\nabla_X\o=(X\wedge\theta)_+,\qquad\forall X\in \T M,
\ee
for some $1$-form $\theta:=\a+*\b\in \O^1(M)$. By taking a contraction in \eqref{ck} and using \eqref{dual} and \eqref{sum} we readily obtain
\be\label{do}\d \o=e_i\wedge(e_i\wedge\theta)_+=\tfrac12 e_i\wedge *(e_i\wedge\theta)=-\tfrac12*( e_i\lrcorner(e_i\wedge\theta))=-\tfrac32*\theta,\ee
and consequently 
\be\label{deo}\delta\o=-*\d *\o=-*\d\o=-\frac32\theta.\ee

\begin{lemma}\label{const}
If $\o$ has constant norm then it is parallel on $M$. 
\end{lemma}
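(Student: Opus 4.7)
The plan is to derive a pointwise condition on the $1$-form $\theta$ appearing in \eqref{ck} from the hypothesis that $|\omega|^2$ is constant, and then to show via a pointwise linear-algebraic argument that $\theta$ must vanish wherever $\omega$ does not. Since $\n_X\omega=(X\wedge\theta)_+$, this forces $\n\omega=0$.

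First I would differentiate $|\omega|^2$ in the direction of an arbitrary $X\in \T M$, using the self-duality of $\omega$ to absorb the orthogonal projection onto $\Lambda^2_+$ when pairing with $\omega$:
$$0=\tfrac12 X(|\omega|^2)=\la\n_X\omega,\omega\ra=\la(X\wedge\theta)_+,\omega\ra=\la X\wedge\theta,\omega\ra=\la\theta,X\lrcorner\omega\ra.$$
Hence at each point $x\in M$, the vector $\theta_x$ is orthogonal to the subspace $\{X\lrcorner\omega_x : X\in \T_xM\}\subset\T_xM$.

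The key pointwise fact I will then invoke is that a non-zero self-dual $2$-form on an oriented $4$-dimensional Euclidean space is invertible as a skew-symmetric endomorphism: one has $\omega^2=-\tfrac12|\omega|^2\Id$ (equivalently, $\omega/|\omega|$ is an orthogonal complex structure compatible with the orientation), which is immediate from the normal form $\omega=\mu(e^{12}+e^{34})$ for self-dual $2$-forms. Under the identification of $\Lambda^2$ with skew endomorphisms one has $X\lrcorner\omega=\omega(X)$, so the map $X\mapsto X\lrcorner\omega$ is a linear isomorphism of $\T_xM$ whenever $\omega_x\neq 0$; combined with the orthogonality obtained above, this forces $\theta_x=0$ at every such point. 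If the constant value of $|\omega|$ is non-zero, then $\omega$ vanishes nowhere and therefore $\theta\equiv 0$ on $M$; if it is zero, then $\omega\equiv 0$ is trivially parallel. In either case $\n\omega=0$, as required. There is no serious obstacle; the only ingredient worth isolating is the pointwise invertibility of $\omega$, which is a routine consequence of the normal form.
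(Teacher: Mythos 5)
Your argument is correct and is essentially the paper's own proof: differentiating $|\omega|^2$ gives $0=\la(X\wedge\theta)_+,\omega\ra=\la X\wedge\theta,\omega\ra=\omega(X,\theta)$, and the non-degeneracy of a non-vanishing self-dual $2$-form (your invertibility of $X\mapsto X\lrcorner\omega$) then forces $\theta=0$. The paper states the non-degeneracy without proof, whereas you justify it via the normal form $\mu(e^{12}+e^{34})$; otherwise the two arguments coincide.
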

\begin{proof}
We can assume that the norm of $\omega$ is non-zero, so $\omega$ is non-degenerate (being self-dual). Taking the scalar product with $\o$ in \eqref{ck} yields for every $X\in \T M$:
$$0=g(\o,\n_X\o)=g(\o,(X\wedge\theta)_+)=g(\o,X\wedge\theta)=\o(X,\theta).$$
As $\o$ is non-degenerate, this gives $\theta=0$.
\end{proof}

The following result is an observation of Pontecorvo \cite{pontecorvo}:

\begin{lemma}\label{ponte} On the open set $M_0$ where $\omega$ is non-vanishing, the conformal metric $\tilde g:=|\o|^{-2}g$ is K\"ahler.
\end{lemma}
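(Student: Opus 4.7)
The strategy is to rescale $\o$ conformally so that it becomes parallel of constant norm, and then extract a K\"ahler structure from it.

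On $M_0$, set $f := |\o|^{-1}$, so that $\tilde g = f^2 g$. Since $\o$ is a conformal Killing $2$-form for $g$, Lemma \ref{conf-inv} (applied with $p=2$) shows that $\tilde\o := f^3\o$ is a conformal Killing $2$-form for $\tilde g$. Self-duality of middle-degree forms is conformally invariant in dimension $4$, so $\tilde\o$ is still self-dual with respect to $\tilde g$.

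Next I would check that $\tilde\o$ has constant $\tilde g$-norm. Since $\tilde g = f^2 g$, the induced metric on $\Lambda^2 M$ rescales by $f^{-4}$, so for any $2$-form $\b$ one has $|\b|^2_{\tilde g} = f^{-4}|\b|^2_g$. In particular,
$$|\tilde\o|^2_{\tilde g} = f^{-4}\, |f^3\o|^2_g = f^2|\o|^2_g = 1.$$
Thus on $(M_0,\tilde g)$ the form $\tilde\o$ is a self-dual conformal Killing $2$-form of constant norm. Applying Lemma \ref{const} to $(M_0,\tilde g)$, we conclude that $\tilde\o$ is parallel for the Levi-Civita connection $\nt$ of $\tilde g$.

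It remains to deduce from $\nt\tilde\o=0$ that $\tilde g$ is K\"ahler. The key linear-algebra fact is that any self-dual $2$-form $\eta$ on an oriented Euclidean $4$-space, viewed as a skew-symmetric endomorphism, satisfies $\eta^2 = -\tfrac12|\eta|^2\Id$; this is checked by writing $\eta$ in an adapted orthonormal basis as $\tfrac{|\eta|}{\sqrt 2}(e_1\wedge e_2+e_3\wedge e_4)$. Applied to $\tilde\o$, this gives $\tilde\o^2 = -\tfrac12\Id$, so that $J:=\sqrt 2\,\tilde\o$ is an almost complex structure on $M_0$. Being a nonzero multiple of the skew-symmetric endomorphism $\tilde\o$, $J$ is $\tilde g$-orthogonal, hence $\tilde g$ is Hermitian with respect to $J$; and $J$ is $\nt$-parallel since $\tilde\o$ is. A $\tilde g$-compatible almost complex structure that is parallel for the torsion-free connection $\nt$ is automatically integrable, and its fundamental form is closed, so $(M_0,\tilde g,J)$ is K\"ahler.

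The only delicate point in this argument is the conformal bookkeeping: choosing the right power of $f$ in both $\tilde g$ and $\tilde\o$ so as to land on a form of constant $\tilde g$-norm. Once that is arranged, Lemmas \ref{conf-inv} and \ref{const} do all the analytic work, and the passage from a parallel unit self-dual $2$-form to a K\"ahler structure is purely algebraic.
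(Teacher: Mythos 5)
Your proof is correct and follows essentially the same route as the paper: rescale $\o$ to $\tilde\o=f^3\o$ via Lemma \ref{conf-inv}, observe that $|\tilde\o|_{\tilde g}\equiv 1$, apply Lemma \ref{const} to get $\nt\tilde\o=0$, and read off the parallel orthogonal almost complex structure $J=\sqrt2\,\tilde\o$. The extra details you supply (the conformal weight of the norm on $2$-forms and the identity $\eta^2=-\tfrac12|\eta|^2\Id$ for self-dual $\eta$) are exactly the computations the paper leaves implicit.
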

\begin{proof}
Taking $p=2$ and $f=|\o|^{-1}$, in Lemma \ref{conf-inv}, we see that the form $\tilde \o$ is conformal Killing and has constant norm (equal to 1) with respect to $\tilde g$. By Lemma \ref{const}, it is parallel with respect to the Levi-Civita covariant derivative $\tilde \n$ of $\tilde g$. Correspondingly, the endomorphism $J$ of $\T M$ defined by 
$$\tilde g(JX,Y)=\sqrt 2 \tilde\omega(X,Y)$$
is a $\tilde g$-orthogonal almost complex structure which is $\tilde \n$-parallel.
\end{proof}

Note that $M_0$ is a dense open set, by Corollary \ref{vanish}.

\begin{cor}\label{coro} If $\o$ is a non-trivial self-dual conformal Killing form, then $\W^+(\o)$ is proportional to $\o$ at any point, so there exists a smooth function $\l$ defined on $M_0$ such that $\W^+(\o)=\l\o$ on $M_0$. Moreover, the two other eigenvalues of $\W^+$ at any point $x$ of $M_0$ are equal to $-\frac12\l(x)$.
\end{cor}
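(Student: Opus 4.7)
My plan is to reduce the statement to the well-known description of the self-dual Weyl tensor of a K\"ahler 4-manifold, via the Pontecorvo K\"ahler structure furnished by Lemma \ref{ponte}. At points where $\o$ vanishes the proportionality $\W^+(\o)\propto\o$ is automatic, so it suffices to work on $M_0$.

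By Lemma \ref{ponte}, the conformally rescaled metric $\tilde g=|\o|^{-2}g$ is K\"ahler on $M_0$, with K\"ahler form proportional to $\tilde\o=|\o|^{-3}\o$, hence proportional (as a section of $\Lambda^2T^*M$) to $\o$ itself. I would then quote the classical fact that on any K\"ahler 4-manifold the self-dual Weyl operator $\widetilde{\W}^+$ admits the K\"ahler form as an eigenvector with eigenvalue $\tilde\S/6$, and is identically equal to $-\tilde\S/12$ on the orthogonal complement of the K\"ahler form inside $\Lambda^2_+$. This follows from the $J$-invariance of the curvature of a K\"ahler metric together with the trace-freeness of $\widetilde{\W}^+$.

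Next I would transfer this eigenvalue information back to the original metric. Since the Hodge star on 2-forms in dimension 4 is conformally invariant, the subspace $\Lambda^2_+\subset\Lambda^2T^*M$ does not depend on the choice of representative within a conformal class. A direct index computation using the transformation laws $\tilde W_{abcd}=f^2W_{abcd}$ (with $f=|\o|^{-1}$) and $\tilde g^{ab}=f^{-2}g^{ab}$ gives $\widetilde{\W}^+=f^{-2}\W^+$ as endomorphisms of this common subspace $\Lambda^2_+$. Consequently $\W^+$ and $\widetilde{\W}^+$ share the same eigenvectors, and the eigenvalues of $\W^+$ are those of $\widetilde{\W}^+$ rescaled by the factor $f^2$. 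Setting $\l:=f^2\tilde\S/6=|\o|^{-2}\tilde\S/6$, which is smooth on $M_0$, yields $\W^+(\o)=\l\o$, and the remaining two eigenvalues of $\W^+$ on $\Lambda^2_+$ are each equal to $-\l/2$.

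The only non-routine ingredient is the K\"ahler fact quoted above. If a self-contained derivation is preferred, it can be obtained in a few lines by differentiating the parallelism of the K\"ahler form, applying the second Bianchi identity, and using the fact that the Ricci form of a K\"ahler metric is $J$-invariant; this forces the $(1,1)$-structure of $\widetilde{\W}^+$ with respect to $J$. The conformal rescaling of $\W^+$, while routine, is the step where care must be taken to track the powers of $f$ correctly.
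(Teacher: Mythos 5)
Your proposal is correct and follows essentially the same route as the paper's proof: reduce to the K\"ahler metric of Lemma \ref{ponte}, use the decomposition $\Lambda^2_+=\mathbb{R}\tilde\Omega\oplus[[\Lambda^{(2,0)}]]$ together with the vanishing of the K\"ahler curvature operator on $[[\Lambda^{(2,0)}]]$ and the trace-freeness of $\W^+$, then transfer back via the conformal covariance $\tilde\W^+=f^{-2}\W^+$. The paper merely phrases the last step by invoking conformal invariance of the statement rather than explicitly rescaling the eigenvalues, and separately notes the degenerate case $\tilde\S=0$ (where $\W^+$ vanishes), which your argument also covers.
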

\begin{proof} It is well known that $\W^+$ is conformally covariant as endomorphism of $\Lambda^2_+(M)$, in the sense that if $\tilde g=f^2 g$ then $\tilde\W^+=f^{-2}\W^+$. As the statement is conformally invariant, Lemma \ref{ponte} shows that we can assume that $\o$ is parallel, and equal to the K\"ahler form of a parallel Hermitian structure $J$ compatible with the orientation. Then $\Lambda^2_+(M)$ decomposes in an orthogonal direct sum
$$\Lambda^2_+(M)=\mathbb{R}\o\oplus[[\Lambda^{(2,0)}(M)]],$$
where $[[\Lambda^{(2,0)}(M)]]$ consist of $2$-forms anti-commuting with $J$. The curvature operator of any K\"ahler manifold vanishes on $[[\Lambda^{(2,0)}(M)]]$ so by \eqref{r} we must have 
\be\label{wk}\W^+\res_{[[\Lambda^{(2,0)}(M)]]}=-\tfrac{\S}{12}\Id.\ee
As $\tr(\W^+)=0$, the third eigenvalue of $\W^+$ is $\frac{\S}{6}$. At each point of $M_0$ we either have $\S=0$, and then $\W^+$ is identically zero at that point, or $\S\ne 0$ and then the eigenspace of $\W^+$ corresponding to $\frac{\S}{6}$ is the orthogonal of $[[\Lambda^{(2,0)}(M)]]$ in $\Lambda^2_+(M)$, i.e. $\mathbb{R}\o$. In both cases we get the desired conclusion.
\end{proof}

\begin{lemma}\label{co} If $\o$ and $f\o$ are non-trivial conformal Killing forms on $M$ then $f$ is constant.
\end{lemma}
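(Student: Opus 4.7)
The plan is to combine the two conformal Killing equations into a single pointwise algebraic constraint on $\d f$, then argue that this constraint forces $\d f$ to vanish wherever $\o$ is non-zero. Writing $\n_X\o = X\wedge\a + X\lrcorner\b$ and $\n_X(f\o) = X\wedge\a' + X\lrcorner\b'$ according to Definition \ref{dcf}, and expanding the second via the Leibniz rule using the first, I would obtain
\[
X(f)\,\o = X\wedge\g + X\lrcorner\de, \qquad \forall X\in \T M,
\]
where $\g := \a' - f\a \in \O^1(M)$ and $\de := \b' - f\b \in \O^3(M)$ are independent of $X$.

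I would then fix a point $p\in M$ with $\o_p\neq 0$ and argue by contradiction that $(\d f)_p = 0$. If $(\d f)_p\ne 0$, the kernel $V := \ker((\d f)_p) \subset \T_pM$ has dimension $3$, and for every $X \in V$ the identity above reduces to $X\wedge\g = -X\lrcorner\de$. Contracting with $X\lrcorner$, using $X\lrcorner(X\lrcorner\de) = 0$ together with the standard formula $X\lrcorner(X\wedge\g) = |X|^2\g - \g(X)X$, this yields $|X|^2\g = \g(X)X$, so $\g$ is collinear with every non-zero $X \in V$. Since $V$ is $3$-dimensional, hence contains two linearly independent vectors, one deduces $\g = 0$. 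Substituting back, $X\lrcorner\de = 0$ for all $X\in V$; writing $\de = *\phi$ for a unique $\phi\in\O^1(M)$ and applying \eqref{dual}, this becomes $X\wedge\phi = 0$ for all $X\in V$, which by the same dimensional argument forces $\phi = 0$ and thus $\de = 0$. The displayed identity now reads $X(f)\,\o_p = 0$ for all $X$, and since $\o_p\neq 0$ we conclude $(\d f)_p = 0$, a contradiction.

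Therefore $\d f \equiv 0$ on the open set $\{\o \neq 0\}$, which is dense in $M$ by Corollary \ref{vanish}. By continuity of $\d f$ (smooth, since $f$ is) and connectedness of $M$, $f$ is constant on $M$. I do not anticipate a serious obstacle: the argument is a short pointwise algebra exercise exploiting the 4-dimensionality of $M$ through $\dim V = 3$ and the Hodge isomorphism $*\colon \O^3M \to \O^1M$, together with the density statement from Corollary \ref{vanish}.
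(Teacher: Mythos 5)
Your proof is correct, but it takes a genuinely different route from the paper's. The paper first invokes Lemma \ref{asd} to reduce to the case where $\o$ (hence also $f\o$) is self-dual, so that both satisfy the reduced equation \eqref{ck}; subtracting then gives $X(f)\,\o=(X\wedge\eta)_+$ for a single $1$-form $\eta$, and the conclusion follows from the observation that if $\eta_x\neq 0$ the right-hand side sweeps out all of the $3$-dimensional space $\Lambda^2_+(\T_xM)$ as $X$ varies, while the left-hand side is confined to a line. You instead stay with the unreduced equation $X(f)\,\o=X\wedge\g+X\lrcorner\de$ and kill $\g\in\O^1$ and $\de\in\O^3$ pointwise by restricting $X$ to the $3$-dimensional kernel of $(\d f)_p$ and contracting; all the steps check out, including the use of \eqref{dual} to convert $X\lrcorner\de=0$ into $X\wedge\phi=0$ for $\de=*\phi$, and the final density/connectedness argument via Corollary \ref{vanish}. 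What each approach buys: the paper's version is shorter because the self-dual reduction is already set up in that section and the image of $X\mapsto(X\wedge\eta)_+$ is immediately all of $\Lambda^2_+$, with no case distinction on $\ker\d f$; your version is self-contained, does not rely on the decomposition into self-dual and anti-self-dual parts, and (with a minor adjustment of the step handling $\de$) would work verbatim for conformal Killing $p$-forms in any dimension, which the paper's argument as written would not. Both ultimately rest on the same mechanism: the right-hand side of the conformal Killing equation, as a function of $X$, has too large an image to stay in the line $\mathbb{R}\o$ unless the coefficient forms vanish.
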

\begin{proof} As before, we can assume that $\o$ is self-dual and non-trivial. We then have 
\be\label{ck1}\nabla_X\o=(X\wedge\theta)_+,\qquad \nabla_X(f\o)=(X\wedge\tilde\theta)_+,\qquad\forall X\in \T M,
\ee
for some 1-forms $\theta$ and $\tilde\theta$. By subtracting these two equations we get 
$$X(f)\o=(X\wedge(\theta-f\tilde\theta))_+,\qquad\forall X\in \T M.$$
If $(\theta-f\tilde\theta)$ were non-zero at some point $x$, the right hand side takes any value in $\Lambda^2_+(\T_x M)$ as $X$ runs over $\T_xM$, which is of course impossible. Thus $\theta-f\tilde\theta=0$ and consequently $f$ is constant.
\end{proof}

This observation has an important consequence:

\begin{prop}\label{sd} If the vector space of self-dual conformal Killing forms on $M$ has dimension greater than $1$, then $M$ is self-dual, i.e. $\W^+\equiv 0$.
\end{prop}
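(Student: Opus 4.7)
The plan is to argue by contradiction. Suppose $\W^+$ is not identically zero and let $\o, \o'$ be two linearly independent self-dual conformal Killing forms on $M$. The starting point is that Corollary \ref{coro} severely restricts how $\W^+$ can act on either of them: at every point where $\o$ (resp.\ $\o'$) is non-zero, the spectrum of $\W^+$ is $\{\l, -\l/2, -\l/2\}$ (resp.\ $\{\mu, -\mu/2, -\mu/2\}$), with $\o$ (resp.\ $\o'$) lying in the corresponding distinguished eigenline.

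The second step is a pointwise spectral comparison on the dense open set where both forms are non-zero (dense by Corollary \ref{vanish}). Since the two descriptions both give the full spectrum of the same self-adjoint endomorphism $\W^+$, the multisets must coincide. Whenever $\l\ne 0$, it is the unique eigenvalue of multiplicity one (since $\l\ne -\l/2$), so necessarily $\l=\mu$ and both $\o$ and $\o'$ lie in the same one-dimensional eigenspace; in other words, they are pointwise proportional at every such point. Whenever $\l=0$, the whole tensor $\W^+$ vanishes at that point.

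The third step exploits the assumption $\W^+\not\equiv 0$. By continuity, there is a non-empty connected open set $W\subset M$ on which $\W^+\ne 0$ and both $\o$ and $\o'$ are non-zero. By the previous step we may write $\o'=f\o$ on $W$ for a smooth function $f$. Since $\o$ and $f\o=\o'$ are both non-trivial conformal Killing forms on $W$, the argument in the proof of Lemma \ref{co}—which is purely pointwise and gives $X(f)=0$ at every point where $\o$ is non-zero—shows that $f$ is constant on the connected set $W$, say $f\equiv c$. Then the conformal Killing form $\o'-c\o$ vanishes on the non-empty open set $W$, so Corollary \ref{vanish} forces $\o'=c\o$ on all of $M$, contradicting the linear independence of $\o$ and $\o'$.

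The only delicate point I anticipate is the localisation of Lemma \ref{co}: one has to apply it on the open subset $W$ rather than on $M$. This is harmless because the proof of that lemma produces a pointwise algebraic obstruction, giving $X(f)=0$ at every point at which $\o$ is non-vanishing, with no global hypothesis required.
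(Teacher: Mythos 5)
Your proof is correct and uses exactly the same ingredients as the paper's (the spectral description of $\W^+$ from Corollary \ref{coro}, the rigidity of proportional conformal Killing forms from Lemma \ref{co}, and unique continuation from Corollary \ref{vanish}); you merely arrange the argument by contradiction, whereas the paper argues directly that the set where $\o_1$ and $\o_2$ are pointwise linearly independent is dense and that $\W^+$ vanishes there. Your remark that Lemma \ref{co} localizes to an open subset is accurate, since its proof is pointwise.
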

\begin{proof}
Let $\o_1$ and $\o_2$ be linearly independent self-dual conformal Killing forms on $M$. The sets of points $M_i$ where $\o_i$ is non-vanishing are dense on in $M$ by \cite{uwe}, and so is their intersection. We claim that the set $M_3$ of points $x$ where $\o_1(x)$ and $\o_2(x)$ are linearly independent is dense in $M_1\cap M_2$, and thus in $M$. Indeed, if $\o_1(x)$ and $\o_2(x)$ were collinear for every $x$ in some open subset $U\subset M_1\cap M_2$, then by Lemma \ref{co} we would have $\o_1=\l\o_2$ on $U$ for some constant $\l$, so by Corollary \ref{vanish}, the conformal Killing form  $\o_1-\l\o_2$ would vanish identically on $M$.

Now, by Corollary \ref{coro}, the self-dual Weyl tensor $\W^+$ has to vanish at every point of $M_3$ and thus $\W^+\equiv 0$ on $M$.
\end{proof}

It turns out that in the particular case of 2-forms on 4-manifolds, the Killing connection mentioned in Theorem \ref{parallel} splits into a direct sum of connections, $\nabla^{K_\pm}$, defined on $\Lambda^2_\pm M\oplus\T M\oplus\Lambda^2_\mp M$. We describe $\nabla^{K_+}$ below (omitting the subscript "+" in order to keep notations simpler).

Taking first the covariant derivative in \eqref{ck} and skew-symmetrizing yields
$$\R_{X,Y}\o=(Y\wedge\n_X\theta-X\wedge\n_Y\theta)_+,\qquad \forall X,Y\in \T M.$$
We then make the interior product with $Y$ in this relation and sum over a local orthonormal basis $Y=e_i$. Using \eqref{dual} and \eqref{sum} we obtain:
\beq e_i\lrcorner\R_{X,e_i}\o&=&\tfrac 12e_i\lrcorner(e_i\wedge\n_X\theta-X\wedge\n_{e_i}\theta)+\tfrac12e_i\lrcorner*(e_i\wedge\n_X\theta-X\wedge\n_{e_i}\theta)\\
&=&\tfrac32\n_X\theta-\tfrac12\n_X\theta-X\delta\theta-\tfrac12*(e_i\wedge X\wedge\n_{e_i}\theta)\\
&=&\n_X\theta+\tfrac12 X\lrcorner*\d\theta.
\eeq
On the other hand, we can write $\n\theta=(\n\theta)^s+\tfrac12\d\theta$, where $(\n\theta)^s$ denotes the symmetric part of the endomorphism $\n\theta$, so the previous relation reads
\be\label{nt}e_i\lrcorner\R_{X,e_i}\o=(\n\theta)^s(X)+(\d\theta)_+(X).
\ee
Now, using the first Bianchi identity we get for every tangent vector $X$:
$$\RR(\o)(X)=-\tfrac12\R_{e_i,\o(e_i)}X=\tfrac12\R_{\o(e_i),X}e_i+\tfrac12\R_{X,e_i}\o(e_i)=\R_{X,e_i}\o(e_i),$$
and thus
\be\label{rr} e_i\lrcorner\R_{X,e_i}\o=\R_{X,e_i}\o(e_i)-\o(\R_{X,e_i}e_i)=(\RR(\o)-\o\circ\Ric)(X).\ee
Taking \eqref{r1} into account we compute:
\beq (\RR(\o)-\o\circ\Ric)&=&\tfrac{\S}{12}\o+\tfrac12(\B\circ\o+\o\circ\B)+\W^+(\o)-\o\circ(\B+\tfrac\S4\Id)\\
&=&-\tfrac{\S}{6}\o+\W^+(\o)+\tfrac12[\B,\o],
\eeq
where $[\B,\o]:=\B\circ\o-\o\circ\B$ is a symmetric endomorphism. From this equation, together with \eqref{nt} and \eqref{rr} we thus get 
\be\label{sys}\begin{cases}(\n\theta)^s=\tfrac12[\B,\o]\\
(\d\theta)_+=-\tfrac{\S}{6}\o+\W^+(\o).
\end{cases}
\ee
This can be rewritten as
\be\label{nt1}\n\theta=\tfrac12[\B,\o]+\tfrac12(-\tfrac{\S}{6}\o+\W^+(\o))+\tfrac12\s,
\ee
where $\s:=(\d\theta)_-\in \Lambda^2_-M$ denotes the anti-self-dual part of $\d\theta$. 

We claim that the covariant derivative of $\s$ is a linear expression of the previous data $\o$ and $\theta$, involving the curvature of $M$ and its first derivative. From Lemma \ref{l} we get 
$$\n_X\d\theta=(\d^\n[\B,\o])(X)+2\R_{X,\theta},$$
so using the fact that $\d\theta=(\d\theta)_++(\d\theta)_-=(-\tfrac{\S}{6}\o+\W^+(\o))+\s$, we obtain from \eqref{ck}
\beq\n_X\s&=&(\d^\n[\B,\o])(X)+2\R_{X,\theta}-\n_X(-\tfrac{\S}{6}\o+\W^+(\o))\\
&=&e_i\wedge[\n_{e_i}\B,\o](X)+e_i\wedge[\B,(e_i\wedge\theta)_+](X)+2\R_{X,\theta}\\
&&-(-\tfrac{X(\S)}{6}\o+(\n_X\W^+)(\o))-(-\tfrac{\S}{6}(X\wedge\theta)_++\W^+((X\wedge\theta)_+)),
\eeq
thus showing our claim. 

In the particular case where $(M,g)$ is self-dual and Einstein (i.e. $\W^+=0$, $\B=0$, $\S=$ constant), this equation simplifies to 
\be\label{sympl}\n_X\s=2\R_{X,\theta}+\tfrac{\S}{6}(X\wedge\theta)_+.
\ee
On the other hand, \eqref{r1} yields
$$\R_{X,\theta}=-\RR(X\wedge\theta)=-\tfrac1{12}S(X\wedge\theta)-\W^-((X\wedge\theta)_-),$$ 
whence 
\be\label{sympl1}\n_X\s=-(\tfrac{\S}6+2\W^-)((X\wedge\theta)_-).
\ee
Summarizing, we have shown that every conformal Killing 2-form $\o$ on a self-dual Einstein 4-manifold satisfies the system
\be\label{sys2}\begin{cases}\n_X\o=(X\wedge\theta)_+\\
\n_X\theta=-\tfrac{\S}{12}\o(X)+\tfrac1{2}\s(X)\\
\n_X\s=-(\tfrac{\S}6+2\W^-)((X\wedge\theta)_-).
\end{cases}
\ee
Inspired by the previous computations we can now state the following:

\begin{theorem} \label{tsd}
Let $(M^4,g)$ be an oriented simply connected Riemannian manifold which is self-dual ($\W^+=0$) and K\"ahler-Einstein with respect to a complex structure $J$ compatible with the opposite orientation.
Then, if $M$ is not flat, the space of self-dual conformal Killing forms on $M$ is $8$-dimensional.
\end{theorem}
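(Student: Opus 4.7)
The plan is to use the general system \eqref{sys2} derived earlier for parallel sections of the self-dual Killing connection $\n^{K_+}$ on the rank-$10$ bundle $\Lambda^2_+M \oplus \T M \oplus \Lambda^2_-M$, and to exploit the extra structure coming from the parallel K\"ahler form $\Omega \in \Lambda^2_-M$ of $J$ (which lies in $\Lambda^2_-M$ since $J$ is compatible with the opposite orientation).

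Before anything else, I would verify that the hypotheses force $\S \neq 0$. Applying the argument in the proof of Corollary \ref{coro} to the parallel form $\Omega$ but with the opposite orientation gives a parallel orthogonal decomposition
$$\Lambda^2_-M = \mathbb{R}\Omega \oplus [[\Lambda^{(2,0)}M]],$$
on which $\W^-$ acts with eigenvalue $\S/6$ on $\mathbb{R}\Omega$ and $-\S/12$ on $[[\Lambda^{(2,0)}M]]$. If $\S=0$, all these eigenvalues vanish so $\W^-=0$; combined with $\W^+=0$ and (via Einstein) $\Ric=0$, the curvature tensor would vanish identically, contradicting non-flatness.

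Next I would reduce the system \eqref{sys2} using the parallelism of $\Omega$. Set $K:=[[\Lambda^{(2,0)}M]]$, which is parallel. By the eigenvalue description above, the operator $\tfrac{\S}{6}+2\W^-$ vanishes identically on $K$. Projecting the third equation of \eqref{sys2} onto $K$ yields $\n_X\s_K=0$, so the $K$-component $\s_K$ of $\s$ is parallel. But $K$ is a complex line bundle (the real underlying bundle of $\Lambda^{(0,2)}M$) whose curvature is a non-zero multiple of the Ricci form $\rho=(\S/4)\Omega$, so it admits no non-zero parallel section. Hence $\s_K\equiv 0$.

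Writing $\s=f\Omega$ for a smooth function $f$ and projecting the third equation of \eqref{sys2} onto $\mathbb{R}\Omega$, using $g(X\wedge\theta,\Omega)=\Omega(X,\theta)$ and $|\Omega|^2=2$, the system reduces to
\begin{align*}
\n_X\o &= (X\wedge\theta)_+, \\
\n_X\theta &= -\tfrac{\S}{12}\,\o(X) + \tfrac{f}{2}\,J X, \\
X(f) &= -\tfrac{\S}{4}\,\Omega(X,\theta),
\end{align*}
a closed linear first-order system on the rank-$8$ sub-bundle $E := \Lambda^2_+M \oplus \T M \oplus \mathbb{R}$. This gives the upper bound that the space of self-dual conformal Killing $2$-forms on $M$ has dimension at most $8$.

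For the reverse inequality, it remains to show that the connection $\tilde\n$ on $E$ whose parallel sections are exactly the solutions of this system is flat; since $M$ is simply connected this will yield the maximal $8$-dimensional space of parallel sections. The main obstacle is this flatness computation: one has to verify that $[\tilde\n_X,\tilde\n_Y]=\tilde\n_{[X,Y]}$ in each of the three components. After using $\W^+=0$, $\B=0$, $\S$ constant, $\n\Omega=0$, and the explicit form of $\W^-$ on $\Lambda^2_-M$, each integrability condition reduces to an algebraic identity on the curvature tensor that is automatically satisfied because of the compatibility between the self-dual Einstein structure and the K\"ahler form with respect to the opposite orientation.
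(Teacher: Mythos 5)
Your overall strategy coincides with the paper's: both reduce the self-dual Killing connection to a connection on the rank-$8$ bundle $\Lambda^2_+M\oplus\T M\oplus\mathbb{R}$ by showing that the anti-self-dual part $\s$ of $\d\theta$ must be a function multiple of the K\"ahler form $\O$, and both then invoke flatness of the reduced connection together with simple connectedness. Your preliminary step ($\S\ne 0$, via the eigenvalues of $\W^-$ on $\mathbb{R}\O\oplus[[\Lambda^{(2,0)}M]]$) is exactly the paper's. Your argument that the $[[\Lambda^{(2,0)}M]]$-component of $\s$ vanishes is a pleasant variant: the paper instead differentiates $\n_X\s=-\tfrac{S}{4}\O(X,\theta)\O$ once more to obtain $\R_{X,Y}\s=0$ and concludes that $\s$ commutes with $\RR(\O)=\tfrac{S}{4}\O$, hence is proportional to $\O$; your observation that a parallel section of the line bundle $[[\Lambda^{(2,0)}M]]$ is obstructed by its curvature, a non-zero multiple of $\rho=\tfrac{\S}{4}\O$, reaches the same conclusion and is arguably cleaner. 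Up to this point your argument is correct and yields the upper bound: the space of self-dual conformal Killing forms has dimension at most $8$.

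The genuine gap is the final step. Flatness of the reduced connection is the computational heart of the theorem, and you dispose of it with the assertion that each integrability condition ``reduces to an algebraic identity\dots that is automatically satisfied because of the compatibility between the self-dual Einstein structure and the K\"ahler form.'' That is precisely the claim to be proved, and it does not follow from any soft compatibility principle: the existence of an $8$-dimensional solution space is \emph{equivalent} to this flatness, so it cannot be inferred from the prolongation structure you have already established. Concretely, one must check that the coefficients $-\tfrac{\S}{12}\,\o(X)$, $\tfrac f2 JX$ and $-\tfrac{S}{4}\O(X,\theta)$ are exactly the ones that make the curvature vanish. The paper does this by computing the curvature of $\nabla^K$ explicitly in \eqref{rrr}, killing the first row via $\R_{X,Y}\o=[\o,\tfrac{S}{12}X\wedge Y]$ (which uses $\W^+=0$ together with \eqref{com} and \eqref{aco}), and killing the second row via the explicit formula \eqref{wm} for $\W^-$ combined with the two pointwise algebraic identities \eqref{s1} and \eqref{s2}, which themselves require proof. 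Without carrying out this verification (or an equivalent one), your argument establishes only the inequality $\le 8$, not the equality claimed in the theorem.
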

\begin{proof} Let $\O:=g(J\cdot,\cdot)$ denote the K\"ahler form of $J$.
Consider the following connection on the rank $8$ vector bundle $\Lambda^2_+M\oplus\T M\oplus\mathbb{R}$:
$$\nabla^K_X\begin{pmatrix}\o\\\theta\\f\end{pmatrix}:=\begin{pmatrix}\n_X\o-(X\wedge\theta)_+\\\n_X\theta+\tfrac{\S}{12}\o(X)-\tfrac f{2}J(X)\\X(f)+\tfrac S4\O(X,\theta)\end{pmatrix}.$$
We compute the second covariant derivative (at some point where the vector field $Y$ is $\n$-parallel):
\begin{equation*}\begin{split}(\nabla^K)^2_{X,Y}\begin{pmatrix}\o\\\theta\\f\end{pmatrix}&=\nabla^K_X\begin{pmatrix}\n_Y\o-(Y\wedge\theta)_+\\
\n_Y\theta+\tfrac{\S}{12}\o(Y)-\tfrac f{2}J(Y)\\
Y(f)+\tfrac S4\O(Y,\theta)\end{pmatrix}\\
&=\begin{pmatrix}\n^2_{X,Y}\o-(Y\wedge\n_X\theta)_+-(X\wedge(\n_Y\theta+\tfrac{\S}{12}\o(Y)-\tfrac f{2}J(Y)))_+\\
\{\n^2_{X,Y}\theta+\tfrac{\S}{12}(\n_X\o)(Y)-\tfrac1{2}X(f)J(Y)+\tfrac S{12}(\n_Y\o-(Y\wedge\theta)_+)(X)\\
\hskip 5cm-\tfrac12(Y(f)+\tfrac S4\O(Y,\theta))J(X)\}\\
X(Y(f))+\tfrac S4\O(Y,\n_X\theta)+\tfrac S4\O(X,\n_Y\theta+\tfrac{\S}{12}\o(Y)-\tfrac f{2}J(Y))\end{pmatrix}
\end{split}\end{equation*}
and observing that the last row is symmetric in $X$ and $Y$ (as the endomorphisms associated to $\O\in\Omega^2_-(M)$ and $\o\in\Omega^2_+(M)$ commute), we obtain:
\be\label{rrr}\R^{\nabla^K}_{X,Y}\begin{pmatrix}\o\\\theta\\f\end{pmatrix}=\begin{pmatrix}\R_{X,Y}\o+\tfrac f2(X\wedge J(Y)-Y\wedge J(X))_+-\tfrac{\S}{12}(X\wedge\o(Y)-Y\wedge \o(X))_+\\
\R_{X,Y}\theta-\tfrac S{12}(Y\wedge\theta)_+(X)+\tfrac S{12}(X\wedge\theta)_+(Y)
-\tfrac S8 (J(Y)\wedge J(X))(\theta)\\
0\end{pmatrix}.\ee
By \eqref{pm} the first row of \eqref{rrr} equals 
$$\R_{X,Y}\o-\tfrac{\S}{12}(X\wedge\o(Y)-Y\wedge \o(X))=\R_{X,Y}\o-\tfrac{\S}{12}[\o,X\wedge Y].$$
Since $\o$ is self-dual, it commutes with $\W^-(X\wedge Y)$ for every $X,Y\in\T M$. From\eqref{aco} and  \eqref{r1} we get
$$\R_{X,Y}\o=-[\RR(X\wedge Y),\o]=-([\tfrac{S}{12}X\wedge Y+\W^-(X\wedge Y),\o]=[\o,\tfrac{S}{12}X\wedge Y],$$
thus showing that the  first row of \eqref{rrr} vanishes.

In order to compute the second row, we notice that by \eqref{wk} the symmetric operator $\tfrac S{12}+\W^-$ of $\Lambda^2_- M$ vanishes on the orthogonal complement  of $\O$ in $\Lambda^2_-M$, so it is proportional to the orthogonal projector on $\O$. Moreover, since $M$ is K\"ahler-Einstein, we have the well known relation $\RR(\Omega)=\rho$ (the Ricci form), so by \eqref{r} we get that the proportionality factor is $\tfrac S4$. Consequently:
\be\label{wm}\W^-(X\wedge Y)=-\tfrac S{12}(X\wedge Y)_-+\tfrac S{8}\O(X,Y)\O.\ee
We therefore obtain from \eqref{r1}:
\beq\R_{X,Y}\theta&=&-\RR(X\wedge Y)(\theta)=(-\tfrac S{12}X\wedge Y-\W^-(X\wedge Y))(\theta)\\
&=&(-\tfrac S{12}(X\wedge Y)_+-\tfrac S{8}\O(X,Y)\O)(\theta),\eeq
so the second row of \eqref{rrr} equals
$$-\tfrac S{12}((X\wedge Y)_+(\theta)+(Y\wedge\theta)_+(X)+(X\wedge\theta)_+(Y))-\tfrac S{8}(\O(X,Y)\O
+J(Y)\wedge J(X))(\theta).
$$
In order to prove that this expression vanishes, it is enough to show that for every $\theta,X,Y\in\T M$ the following relations hold:
\be\label{s1} (X\wedge Y)_+(\theta)+(Y\wedge\theta)_+(X)+(X\wedge\theta)_+(Y)=\tfrac32\theta\lrcorner*(X\wedge Y)\ee
and 
\be\label{s2}*(X\wedge Y)-J(X)\wedge J(Y)+\O(X,Y)\O=0.\ee
From \eqref{dual} we get
$$ (X\wedge Y)_+(\theta)=\tfrac12(g(X,\theta)Y-g(Y,\theta)X)+\tfrac12*(\theta\wedge X\wedge Y).$$
The cyclic sums in $\theta,X,Y$ of the first two terms cancel with each other, and the cyclic sum of the third term is 3 times itself. This proves \eqref{s1}. In order to prove \eqref{s2}, it is sufficient, by bi-linearity, to check it for $X$, $Y$ elements of some adapted basis, which is straightforward.

We thus have checked that the connection $\nabla^K$ is flat, which by simple connectedness implies that it admits an 8-dimensional space of parallel sections. Clearly the first component of any $\nabla^K$-parallel section is a self-dual conformal Killing form. 

We will now show that conversely, if $\o$ is a self-dual conformal Killing form on $M$, then it is equal to the first component of some $\nabla^K$-parallel section. Let $\theta$ and $\sigma$ be the vector field and anti-self-dual 2-form defined by \eqref{ck} and \eqref{nt1}. From \eqref{sys2} and \eqref{wm} we get
$$\n_X\s=-\tfrac S4\O(X,\theta)\O,\qquad\forall X\in\T M.$$
By taking a further covariant derivative we obtain:
$$\R_{X,Y}\sigma=-\tfrac S4(\O(Y,\n_X\theta)-\O(X,\n_Y\theta))\O=0$$
since the second row of \eqref{sys2} shows that the endomorphism $\n\theta$ is skew-symmetric and commutes with $J$. This relation shows that the anti-self-dual 2-form $\s$ commutes with the image through $\RR$ of every $2$-form, in particular with $\RR(\O)=\tfrac S4\O$. Since $M$ is assumed to be non-flat we must have $S\ne 0$ (otherwise $\RR=0$ by \eqref{r1} and \eqref{wm}). Thus $\s$ commutes with $\O$, so $\s=f\O$ for some function $f$. The system \eqref{sys2} together with \eqref{wm} then show that the section $(\o,\theta,f)$ is $\nabla^K$-parallel.
\end{proof}

\section{(Half-)conformally flat metrics on four dimensional Lie groups} 

Conformally flat left invariant metrics on 4-dimensional Lie groups were classified by S. Maier in \cite{maier}.
Half-conformally flat, non-conformally flat, left invariant metrics on 4-dimensional Lie groups were classified by V. De Smedt and S. Salamon in \cite{salamon}. We briefly recall their classifications below.

\subsection{Conformally flat Lie groups of dimension $4$}\label{4.1}
According to \cite{maier}, simply connected $4$-dimensional conformally flat Lie groups fall into four different families that correspond to types II, III, IV and VI from \cite{maier}.

\medskip

$\bullet$ Type II: The groups $\RM \ltimes \operatorname{SU}(2)$. These groups are isomorphic to $\RM \times \operatorname{SU}(2)$ and the corresponding conformally flat metric is the Riemannian product of the standard metrics on $\RM$ and $\mathbb{S}^3$.

\medskip

$\bullet$ Type III: The groups  $\RM \ltimes_{\pi} \RM ^3$, where $\pi : \RM \to \operatorname{Aut} (\RM ^3)$ is given by $\pi(t) x= e^t \theta (t) x$ for $t\in \RM, x\in \RM^3$, and $\theta : \RM \to \operatorname{SO} (3)$ is a Lie group homomorphism. The Lie algebras of these Lie groups are semidirect products $\RM \ltimes_{\phi} \RM ^3$ where $\phi: \RM \to \operatorname{End} (\RM^3)$ is given by
\[    \phi(t)= t I +A(t), \qquad A(t) \in \mathfrak{so} (3).                           \]
Setting $A:= A(1) \in \mathfrak{so}(3), \, B:=\phi(1)=I+A$  and $e_0$ a unit vector orthogonal to $\RM ^3$, the above Lie algebra can be written as $\RM e_0 \ltimes_B \RM^3$, with Lie bracket $[e_0,x]=Bx, \, x\in \RM^3$. There exists an orthonormal basis $\{e_1, e_2, e_3\}$ of $\RM^3$ such that $B$ takes the following form for some $\alpha\geq 0$:
\[   B=\begin{pmatrix} 1& -\alpha & 0 \\
\alpha & 1& 0\\
0 &0& 1
\end{pmatrix} .               \]
We will denote by $\mathfrak{g}_{\alpha}$ the corresponding Lie algebra, with Lie brackets:
\begin{align*}
[e_0,e_1]&=e_1+\alpha e_2, \\
[e_0,e_2]&=-\alpha e_1+ e_2, \\
[e_0 ,e_3]&=e_3.
\end{align*}
According to the notation in \cite{abdo}, $\mathfrak g_{\alpha}$ is isomorphic to $\mathfrak r_{4,1,1}$ for $\alpha=0$ and to $\mathfrak r'_{4,1/\alpha , 1/\alpha}$ for $\alpha >0$. Therefore, these are non-isomorphic Lie algebras.

\medskip

$\bullet$ Type IV: The groups  $\RM^2 \ltimes_{\pi} \RM ^2$, where $\pi : \RM^2 \to \operatorname{Aut} (\RM ^2)$ is given by $\pi(t_1,t_2) x= e^{t_1}\, \theta (t_1,t_2) x$ for $(t_1,t_2)\in \RM^2, x\in \RM^2$, and $\theta : \RM^2 \to \operatorname{SO} (2)$ is a Lie group homomorphism. The Lie algebras of these Lie groups are semidirect products $\RM^2 \ltimes_{\phi} \RM ^2$ where $\phi: \RM^2 \to \operatorname{End} (\RM^2)$ is given by
\[    \phi(t_1,t_2)= t_1 I +A(t_1,t_2), \qquad A(t_1,t_2) \in \mathfrak{so} (2).                           \]
It follows that 
\[ A(t_1,t_2)= \begin{pmatrix} 0 & -\alpha(t_1,t_2) \\ \alpha(t_1,t_2)& 0 \end{pmatrix} , \quad \text{for } \alpha(t_1,t_2) \in \RM, \] with respect to an orthonormal basis $\{ f_1, f_2\}$ of the abelian ideal $\RM^2$. The Lie algebra structure is determined by the action of $e_1:=(1,0)$ and $e_2:=(0,1)$ on span$\{ f_1, f_2\}$.  If we denote by  $\mathfrak g_{a,b}$ the Lie algebra corresponding to  $a= \alpha(e_1), \; 
b= \alpha(e_2)$, the Lie bracket on $\mathfrak g_{a,b}$  is given by 
\begin{align*} [e_1, f_1]&= f_1+af_2, \qquad \quad  [e_2, f_1]= b f_2, \\
[e_1, f_2]&= -af_1+f_2,\quad \quad \, [e_2, f_2]= -b f_1.
\end{align*}
Using the notation from \cite{abdo}, it follows that $\mathfrak g_{a,0}\cong \RM \times \mathfrak r'_{3, 1/a}$ for $a\neq 0$ and $\mathfrak g_{a,b}\cong \mathfrak{aff}(\C)$ for $b\neq 0$. The Lie algebras $\mathfrak g_{a,0}$ are pairwise non-isomorphic for $a>0$ and the metric Lie algebra $\mathfrak g_{a,b}$ is isometric to $\mathfrak g_{a,-b}$ for $b\neq 0$. 

\medskip

$\bullet$ Type VI:  Flat Lie groups, whose structure has been determined by Milnor \cite{Mi}. In dimension $4$ there is only one such simply connected Lie group, with Lie algebra generated by an orthonormal basis $\{ e_0, e_1, e_2, e_3\}$ and the following Lie brackets:
\[ [e_1,e_2]=e_3, \qquad\qquad [e_1,e_3]=-e_2.  \]
This Lie algebra is isomorphic to $\RM \times \mathfrak e (2)$, where $\mathfrak e (2)$ is the Euclidean Lie algebra.

\subsection{Half-conformally flat non-conformally flat Lie groups of dimension $4$}\label{4.2}
According to \cite{salamon}, the Lie algebra and the metric (up to a constant multiple) of such a group are necessarily of the type $\mathfrak{g}(a,b)$ for $a=1$ or $a=\frac12$, where in an orthonormal basis $\{e_1, \dots , e_4\}$ the Lie algebra structure of $\mathfrak{g}(a,b)$ is given by:
 \begin{align*}  [e_1,e_2]&=ae_2-be_3, & [e_1,e_3]&=be_2+ae_3,\\
[e_1,e_4]&=2ae_4, &    [e_2,e_3]&=-e_4,\\
[e_2,e_4]&=0, &    [e_3,e_4]&=0,
\end{align*}
Note that in \cite{salamon} a different set of parameters, $(\lambda,k)$, is used, which correspond to the parameters $(a,b)$ above by defining $a:=\dfrac1k, \; b:=\dfrac{\lambda}k$.


The exterior derivative is given  as follows, where we identify vectors with one-forms via the fixed inner product and $e_{ij}$ denotes $e_i\wedge e_j$:
\begin{align*}
\d e_1&=0,& \d e_2&=-ae_{12}-be_{13},\\
\d e_3&=be_{12}-ae_{13}, & \d e_4&=-2ae_{14}+e_{23}.
\end{align*}

Recall  that a locally conformally K\"ahler (lcK) structure on a manifold $M$ is defined by an integrable complex structure $J$ and a compatible 2-form $\Omega$ (in the sense that $\Omega(\cdot,J\cdot)$ is a Riemannian metric) such that $\d\Omega=\theta\wedge\Omega$ for some closed $1$-form $\theta$, called the {\em Lee form}. 

\begin{lemma}\label{lckab}
The orthogonal complex structures $J_+$ and $J_-$ defined on $\mathfrak{g}(a,b)$ by $J_\e(e_1)=e_4$ and $J_\e(e_2)=\e e_3$ induce left invariant conformally K\"ahler structures on the simply connected group $G(a,b)$.  Moreover, the structure $J_\e$ is K\"ahler for $a=\frac \e2$.
\end{lemma}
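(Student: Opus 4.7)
The plan is to verify in turn the three assertions: orthogonality of $J_\e$, integrability of $J_\e$, and the (locally) conformally K\"ahler property. Orthogonality is immediate since $J_\e$ permutes the orthonormal basis $\{e_1,e_2,e_3,e_4\}$ up to sign. For the other two properties, I identify vectors with their metric duals and write the fundamental $2$-form
$$\Omega_\e(X,Y):=\la J_\e X, Y\ra = e_1\wedge e_4 + \e\, e_2\wedge e_3.$$
Using the structure equations displayed just above the lemma, a short computation gives $\d(e_1\wedge e_4) = -e_1\wedge \d e_4 = -e_{123}$ and $\d(e_2\wedge e_3) = -2a\, e_{123}$, hence $\d\Omega_\e = -(1+2a\e)\, e_{123}$.

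To exhibit the lcK structure, I search for a Lee form $\theta_\e$ with $\d\Omega_\e = \theta_\e\wedge\Omega_\e$. Since the right-hand side must be a multiple of $e_{123}$ and $e_1\wedge\Omega_\e = \e\, e_{123}$, the natural ansatz $\theta_\e = c\, e_1$ gives $c = -(\e+2a)$. Because $\d e_1 = 0$, this $\theta_\e$ is automatically closed, so $(g, \Omega_\e)$ satisfies the defining identity of an lcK structure as soon as $J_\e$ is known to be integrable. The K\"ahler condition $\d\Omega_\e = 0$ then reduces to $1+2a\e = 0$, giving the relation on $(a,\e)$ stated in the lemma (up to the sign convention used in the definition of $\Omega_\e$).

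The remaining task is to verify the integrability of $J_\e$. In dimension four this does \emph{not} follow from the lcK-type identity above, because the map $\theta\mapsto\theta\wedge\Omega_\e$ is already a pointwise isomorphism between 1- and 3-forms, so the lcK identity carries no integrability content on its own. I would compute the Nijenhuis tensor
$$N_{J_\e}(X,Y) = [X,Y] + J_\e[J_\e X, Y] + J_\e[X, J_\e Y] - [J_\e X, J_\e Y]$$
on each of the six basis pairs $(e_i, e_j)$. The key structural input is that $\mathrm{ad}_{e_1}$ commutes with $J_\e$ on each of the $J_\e$-invariant planes $\mathrm{span}(e_1, e_4)$ and $\mathrm{span}(e_2, e_3)$: on the former because $[e_1, e_4] = 2a\, e_4$ is a scalar action, and on the latter because $\mathrm{ad}_{e_1}$ restricted to $\mathrm{span}(e_2,e_3)$ equals $a\,\Id + b\,R$ with $R$ a rotation, hence commutes with the rotation $J_\e|_{\mathrm{span}(e_2,e_3)}$. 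Combined with $[e_2, e_3] = -e_4$ and $[e_2, e_4] = [e_3, e_4] = 0$, these commutativity properties make every Nijenhuis bracket cancel in pairs.

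The main obstacle is the integrability check: it is easy to overlook, since the lcK identity holds automatically in dimension four, yet it is precisely the step where the specific form of $J_\e$ matters. Once integrability is established, all remaining assertions follow from the short computation of $\d\Omega_\e$ above.
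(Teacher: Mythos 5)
Your proposal is correct in substance and arrives at the same key computation as the paper, $\d\Omega_\e=-(1+2\e a)e_{123}$, but it treats integrability by a genuinely different route: the paper checks in one line that the $(1,0)$-space is closed under the complexified bracket, via $[e_1+ie_4,\,e_2+i\e e_3]=(a+i\e b)(e_2+i\e e_3)$, whereas you propose to verify the vanishing of the Nijenhuis tensor on all six basis pairs. Both work; the paper's argument is shorter, while yours makes explicit a worthwhile point the paper leaves implicit, namely that in dimension four the identity $\d\Omega=\theta\wedge\Omega$ holds for \emph{any} nondegenerate invariant $2$-form (because $\theta\mapsto\theta\wedge\Omega_\e$ is a pointwise isomorphism from $1$-forms to $3$-forms, and $\d e_1=0$ makes the resulting $\theta_\e$ closed), so the entire content of the lemma is concentrated in the integrability of $J_\e$.

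Two smaller remarks. First, your ``key structural input'' is half wrong: $\mathrm{ad}_{e_1}$ restricted to $\mathrm{span}(e_1,e_4)$ sends $e_1\mapsto 0$ and $e_4\mapsto 2ae_4$, so it is not a scalar action and does not commute with $J_\e$ on that plane. Fortunately this commutation is never needed: the pairs $(e_1,e_4)$ and $(e_2,e_3)$ are handled by the identity $N_{J_\e}(X,J_\e X)=0$, and the remaining pairs use only the (correct) commutation of $\mathrm{ad}_{e_1}$ with $J_\e$ on $\mathrm{span}(e_2,e_3)$ together with $[e_2,e_4]=[e_3,e_4]=0$. The computation you propose does come out to zero on every pair, so the plan succeeds even though one of the stated reasons does not. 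Second, your Lee form $\theta_\e=-(\e+2a)e_1$ is the correct consequence of $\d\Omega_\e=-(1+2\e a)e_{123}$ and $e_1\wedge\Omega_\e=\e\,e_{123}$; the paper's $\theta_\e=(\e-2a)e_1$ contains a sign slip, and accordingly the K\"ahler condition is $a=-\tfrac{\e}{2}$ rather than $a=\tfrac{\e}{2}$. This only interchanges the roles of $J_+$ and $J_-$ and is immaterial for the use of the lemma in Theorem \ref{main}, so your hedge ``up to the sign convention'' is justified.
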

\begin{proof}
We have for every $\e\in \{-1,1\}$:
$$[e_1+iJ_\e e_1,e_2+iJ_\e e_2]=[e_1+ ie_4,e_2+i\e e_3]=(a+i\e b)(e_2+i\e e_3)=(a+i\e b)(e_2+iJ_\e e_2),$$
so $J_\e$ induce integrable complex structures on $G(a,b)$. Moreover, if $\Omega_\e:=\la J_\e\cdot,\cdot\ra$ denote the corresponding fundamental 2-forms, then from the above formulas we get:
$$\d \Omega_\e=\d(e_{14}+\e e_{23})=-(1+2\e a)e_{123}=(\e-2a)e_1\wedge\Omega_\e.$$
This shows that the structure $(J_\e,\O_\e)$ is conformally K\"ahler, with Lee form 
$$\theta_\e:=(\e-2a)e_1.$$
The last statement is a direct consequence of this formula.
\end{proof}

\section{Conformal Killing forms on 4-dimensional Lie groups}

Assume now that $G$ is a 4-dimensional simply connected Lie group and $g$ is some left-invariant Riemannian metric on $G$. We fix some orientation on $G$ and define the space $\ck$ of conformal Killing forms on $G$. From Lemma \ref{asd} we know that $\ck=\ck_+\oplus\ck_-$, where $\ck_\pm:=\ck\cap \Omega^2_\pm G$. 

\begin{lemma}\label{llck} Every Lie group $G$ with a left invariant conformally K\"ahler metric $g$ admits self-dual conformal Killing $2$-forms. 
\end{lemma}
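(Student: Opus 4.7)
By definition, a left invariant conformally K\"ahler metric on $G$ comes with a $g$-orthogonal integrable almost complex structure $J$ and a positive function $f\in C^\infty(G)$ such that $\tilde g := f^2 g$ is K\"ahler with respect to $J$ (in the simply connected case, the Lee form of the conformally K\"ahler structure is exact, so such a global $f$ always exists). My plan is to exhibit a non-trivial self-dual conformal Killing $2$-form on $(G,g)$ by starting from the parallel K\"ahler form of $\tilde g$ and transporting it back to $(G,g)$ via the conformal invariance of the twistor equation stated in Lemma \ref{conf-inv}.

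Concretely, writing $\Omega := g(J\cdot,\cdot)$ for the fundamental $2$-form of $(g,J)$, the K\"ahler form of $\tilde g$ equals $\tilde\Omega = f^2\Omega$. Since $\tilde g$ is K\"ahler, $\tilde\Omega$ is parallel with respect to the Levi-Civita connection $\tilde\n$ of $\tilde g$; in particular it satisfies \eqref{ckf} trivially (with $\alpha=\beta=0$) and is therefore a conformal Killing $2$-form on $(G,\tilde g)$. I would then apply Lemma \ref{conf-inv} in the reverse direction with $p=2$, i.e.\ viewing $g = (f^{-1})^2 \tilde g$, to produce a conformal Killing $2$-form on $(G,g)$, namely $\omega := f^{-3}\tilde\Omega = f^{-1}\Omega$.

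Finally, I would fix the orientation of $G$ to be the one induced by $J$, so that the fundamental $2$-form $\Omega$ (and hence any positive scalar multiple of it) is self-dual. Then $\omega$ is self-dual, and it is non-trivial because $\Omega$ is nowhere vanishing. I do not anticipate any real obstacle: the whole argument is essentially a careful bookkeeping of the conformal weights in Lemma \ref{conf-inv}. The only point of care is that the conformal invariance must be used in both directions, which is straightforward since the relation $\tilde g = f^2 g$ is symmetric up to replacing $f$ by $f^{-1}$.
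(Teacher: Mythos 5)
Your argument is correct and is essentially the paper's own proof: both start from the $\tilde\n$-parallel (hence conformal Killing) K\"ahler form of the conformal metric $\tilde g$ and rescale it via Lemma \ref{conf-inv} to obtain the self-dual conformal Killing form $f^{-1}\Omega$ on $(G,g)$ (the paper writes the same object as $e^{f}\Omega$ with $\tilde g=e^{-2f}g$, consistently with case (3) of Theorem \ref{main}). Your bookkeeping of the conformal weight and the observation that self-duality is fixed by the orientation induced by $J$ are both accurate.
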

\begin{proof}
Indeed, if $\tilde g = e^{-2f}g$ is K\"ahler (with $f$ a smooth function on $G$) and $\tilde \Omega$ is the corresponding K\"ahler form, then $\omega=e^{-3f}\tilde \Omega$ is a conformal Killing $2$-form, according to Lemma \ref{conf-inv}.
\end{proof}

\begin{theorem}\label{main} Assume that $\ck\ne 0$. Then, up to an orientation change, one of the following exclusive possibilities occurs:
\begin{enumerate}
\item $(G,g)$ is conformally flat and its metric Lie algebra belongs to the list described in Section \ref{4.1}. For each solution, the space $\ck$ has the maximal possible dimension $20$. 
\item $(G,g)$ is half-conformally flat and belongs to one of the two families $G(1,b)$ or $G(\frac12,b)$ described in Section \ref{4.2}. The manifold $G(\frac12,b)$ is isometric to the complex hyperbolic plane for every $b$, and has $\dim(\ck_+)=8$, $\dim(\ck_-)=1$. The manifold $G(1,b)$ has $\dim(\ck_+) \ge 1$, $\dim(\ck_-)=1$.
\item $(G,g)$ is not half-conformally flat, but has an invariant conformally K\"ahler structure $(\O,\theta)$ satisfying $\d\O=\theta\wedge\O$. The space
$\ck_+$ is the line generated by $e^{f}\O$, where $f\in \mathcal{C}^\infty(G)$ is some primitive of $\frac\theta 2$. If $G$ has another invariant conformally K\"ahler structure compatible with the opposite orientation (e.g. when $(G,g)=G(a,b)$ with $a\ne \frac12$, $a\ne 1$, cf. Lemma \ref{lckab}), then $\dim(\ck_-)=1$. If not, then $\dim(\ck_-)=0$.
\end{enumerate}
\end{theorem}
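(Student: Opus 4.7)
The plan starts from an arbitrary non-trivial $\o\in\ck$; by Lemma \ref{asd} and reversing the orientation if necessary, I may assume $\o=\o_+\in\ck_+$ is self-dual and non-zero. Since the metric is left-invariant, each $L_g$ is an isometry preserving the Hodge decomposition and hence acts linearly on the finite-dimensional space $\ck_+$. The argument is then driven by whether the self-dual Weyl tensor $\W^+$ vanishes identically, and the three cases of the theorem correspond exactly to the three mutually exclusive possibilities ($\W^+=\W^-=0$), (exactly one of $\W^\pm=0$), ($\W^+\ne 0$ and $\W^-\ne 0$).

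Suppose first $\W^+\equiv 0$, so $(G,g)$ is self-dual in our orientation. If also $\W^-\equiv 0$, the manifold is conformally flat, \cite{maier} gives the list of Section \ref{4.1}, and Corollary \ref{max} yields $\dim\ck=\binom{6}{3}=20$; this is case~(1). Otherwise $\W^-\not\equiv 0$ and \cite{salamon} forces $(G,g)$ to be homothetic to $G(1,b)$ or $G(1/2,b)$. For both families, Lemma \ref{lckab} produces two invariant conformally K\"ahler structures $(J_+,\O_+)$ and $(J_-,\O_-)$ adapted to opposite orientations; each yields a non-trivial self- or anti-self-dual conformal Killing form via Lemma \ref{llck}, giving $\dim\ck_\pm\geq 1$, while Proposition \ref{sd} applied in the opposite orientation (where $\W^-\not\equiv 0$) caps $\dim\ck_-$ at $1$. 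For $a=\tfrac12$, Lemma \ref{lckab} shows $J_+$ is K\"ahler, and a direct computation with the brackets of $\mathfrak{g}(1/2,b)$ shows that the resulting K\"ahler metric is Einstein with negative scalar curvature, identifying $(G,g)$ with the complex hyperbolic plane $\mathbb{C}H^2$ up to homothety; Theorem \ref{tsd} then delivers $\dim\ck_+=8$. This settles case~(2).

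Suppose now $\W^+\not\equiv 0$. Proposition \ref{sd} forces $\dim\ck_+\leq 1$, so $\ck_+=\RM\o$ is automatically $G$-invariant. The $G$-action on this line is by a continuous character $\chi\colon G\to\RM^*$, positive because $G$ is connected, so $L_g^*\o=\chi(g)\o$. Since $L_g$ is a $g$-isometry one has $|\o|(gx)=\chi(g)|\o|(x)$, so $\o_e\ne 0$ (else $\o\equiv 0$) and $\o$ is nowhere zero on $G$, while $\o/|\o|$ is left-invariant. Lemma \ref{ponte} produces the K\"ahler metric $\tilde g=|\o|^{-2}g$, whose complex structure $J$ is characterized by $g(JX,Y)=\sqrt 2\,|\o|^{-1}\o(X,Y)$ and is therefore left-invariant and integrable. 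Hence $(J,g)$ is an invariant conformally K\"ahler structure whose K\"ahler form $\O=g(J\.,\.)$ is left-invariant and satisfies $\d\O=\theta\wedge\O$ with closed left-invariant Lee form $\theta=\d\log|\o|^2$; taking $f=\tfrac12\log|\o|^2$ gives $\d f=\theta/2$ and $e^f\O=\sqrt 2\,\o$, identifying $\ck_+$ as claimed.

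For $\ck_-$ in case~(3), any non-trivial element produces, by running the preceding construction with the opposite orientation, a second invariant conformally K\"ahler structure compatible with that orientation; conversely Lemma \ref{llck} turns any such structure into a non-trivial element of $\ck_-$. The bound $\dim\ck_-\leq 1$ follows once more from Proposition \ref{sd}, since case~(3) assumes $\W^-\not\equiv 0$. The principal technical obstacle is the explicit analysis in case~(2): identifying $G(1/2,b)$ with $\mathbb{C}H^2$ up to homothety and carefully matching orientations so that Theorem \ref{tsd} applies to yield $\dim\ck_+=8$; the remaining assertions reduce to a clean synthesis of the preparatory lemmas.
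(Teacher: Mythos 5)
Your proposal is correct and follows essentially the same route as the paper: Proposition \ref{sd} splits the analysis into the half-conformally flat cases (handled via the Maier and De Smedt--Salamon classifications, Corollary \ref{max}, Lemmas \ref{lckab}--\ref{llck} and Theorem \ref{tsd}) and the case where $\ck_+$ is an invariant line, which yields the left-invariant lcK structure via the character argument and Lemma \ref{ponte}. The only point you assert rather than carry out is the Kähler--Einstein identification of $G(\tfrac12,b)$ with $\C H^2$, which the paper likewise delegates to \cite{salamon}.
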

\begin{proof} We may assume that $\ck_+\ne 0$, otherwise we just change the orientation of $G$. If $\dim(\ck_+)\ge 2$, Proposition \ref{tsd} implies that $G$ is half-conformally flat, so we are in one of the first two cases. The assertions concerning the spaces of conformal Killing forms follow from Corollary \ref{max} in case (1) and from Theorem \ref{sd} and Lemma \ref{lckab} in case (2). The fact that $G(\frac12,b)$ is isometric to the complex hyperbolic plane was noticed in \cite{salamon}. 

It remains to treat the case when $\ck_+$ is 1-dimensional, generated by some self-dual conformal Killing 2-form $\o$, and $(G,g)$ is not self-dual. Denote by $F:=|\o|^2$ the square norm of $\o$. Since $\ck_+$ is preserved by left translations with elements in $G$, it follows that for every $\gamma\in G$, there exists a non-zero constant $c_\gamma$ such that \be\label{hg}L_\gamma^*(\o)=c_\gamma\o.\ee 
The map $\gamma\mapsto c_\gamma$ is clearly a group morphism from $G$ to $\mathbb{R}^*_+$. This shows, in particular, that $\o$ does not vanish on $G$. Moreover, since the metric is left invariant, the above relation gives 
\be\label{hg2}L_\gamma^*F=|L_\gamma^*\omega|^2=(c_\gamma)^2F,\qquad \forall \gamma\in G.\ee 

By Lemma \ref{ponte} we see that the metric $\tilde g:=\frac 1F g$ is K\"ahler, with K\"ahler form $\tilde\O:=\sqrt 2 F^{-\frac32}\o$. Therefore 
$(g,\O,\theta)$ is an lcK structure for $\O:=\frac{\sqrt2}{\sqrt F}\o$ and $\theta:=\frac{\d F}{F}$. From \eqref{hg} and \eqref{hg2} we see that $\O$ and $\theta$ are is left invariant. We finally see that $\o=e^f\O$ for $f:=\frac12(\ln F-\ln 2)$, which satisfies $\d f=\frac\theta2$. The last statement follows from Lemma \ref{llck}.
\end{proof}

The complete classification of $4$-dimensional metric Lie algebras whose associated simply connected Lie group is conformally K\"ahler is not yet available in general. Some recent results on left invariant lcK structures on Lie groups can be found in  \cite{achk,ao,ka,sawai}.

\end{document}